\newtheorem{lem}{Lemma}[section]
\newtheorem{prop}{Proposition}[section]
\newtheorem{thm}{Theorem}[section]
\newtheorem{cor}{Corollary}[section]
\theoremstyle{definition}
\newtheorem{ex}{Example}[section]
\newtheorem{ca}{Case}
\title{\bf Constructing Strebel differentials via Belyi maps on the Riemann sphere}
\author{Jijian Song and Bin Xu}
\begin{document}
\maketitle
%\noindent{\small $^{1,2}$School of Mathematical Sciences, University of Science and Technology of China. No. 96 Jinzhai Road, Hefei, Anhui Province 230026 P. R. China.}

%\noindent{\small $^{1}$ smath@mail.ustc.edu.cn \quad $^2$ bxu@ustc.edu.cn}
%\vskip0.5cm

\noindent{\small {\bf Abstract:} In this manuscript, by using Belyi maps and dessin d'enfants, we construct some concrete examples of Strebel differentials with four double poles on the Riemann sphere. As an application, we could give some explicit cone spherical metrics on the Riemann sphere.

\noindent{\bf Keywords.} Strebel differential, metric ribbon graph, Belyi map, dessin d'enfant, cone spherical metric

\noindent {\bf 2010 Mathematics Subject Classification.} Primary 30F30; secondary 14H57}

\section{Introduction}
Let $X$ be a compact Riemann surface, and let $\Omega_{X}$ denote its cotangent bundle. Then a (meromorphic) {\it quadratic differential} $q$ is a (meromorphic) global section of the line bundle $\Omega_{X}^{\otimes2}$. Let Crit$(q)$ denote the set of zeroes and poles of $q$. Hence $q$ is holomorphic and nowhere vanishing on $X \setminus \text{Crit}(q)$. The restriction of $q$ on $X \setminus \text{Crit}(q)$ defines a conformal flat metric, which is called {\it $q$-metric}. With respect to this metric, a curve $\gamma$ is called a {\it horizontal geodesic} if $q > 0$ along $\gamma$. More precisely, in a coordinate chart $\{U, z\}$ of $X \setminus \text{Crit}(q)$, if $q$ has form $f_{U}(z) dz^{2}$, then the corresponding $q$-metric is $|f_{U}(z)|dz d\bar{z}$ and the horizontal geodesic $\gamma(t)$ satisfies $f_{U}(\gamma(t)) \gamma'(t)^{2} > 0$. A maximal horizontal geodesic is called a {\it horizontal trajectory}. In general, a horizontal trajectory of $q$ may be a closed curve ({\it closed}), or bounded by points in Crit$(q)$ ({\it critical}), or neither ({\it recurrent}).

A quadratic differential $q$ with at most double poles is called {\it Jenkins-Strebel} if the union of all non-closed horizontal trajectories and Crit$(q)$ is compact and of measure zero, or equivalently, $q$ has no recurrent trajectories. Such differentials are first investigated by Jenkins \cite{Jenkins1957} to solve an extremal problem. Later, K. Strebel proves many astonishing results about these quadratic differentials in his famous book \cite{Strebel1984}. One of the most important existence theorems is about a special class of Jenkins-Strebel differentials, which are called {\it Strebel differentials}(see Section \ref{sec:pre} for more details). Arbarello and Cornalba \cite{Arbarello2010} give a directly proof of the existence and uniqueness of Strebel differentials. However, except these general pure existence theorems, it seems that seldom have people talked about how to construct Strebel differentials, not to mention explicit expressions of such differentials.

In contrast to few explicit constructions of Strebel differentials, there are many applications of Strebel differentials in mathematics, such as studying Teichm\"{u}ller theory and the moduli space of pointed compact Riemann surfaces\cite{Loo1995, Masur2009}. Furthermore, Kontsevich\cite{Kon1992,Zvo2004} uses the cell decomposition induced by Strebel differentials to prove Witten's conjecture.

A cone spherical metric is a conformal metric on a compact Riemann surface with constant Gaussian curvature $+1$ and isolated conical singularities. In \cite{Song2017}, we have shown that all periods of Strebel differentials are real. By using this fact, we give a canonical construction of cone spherical metrics by Strebel differentials. In more detail,  suppose $a_{1}, a_{2}, \cdots, a_{n}$ are the residues of a Strebel differential $q$ at $p_{1}, p_{2}, \cdots, p_{n}$ and $m_{1}, m_{2}, \cdots, m_{l}$ are the multiplicities of the zeroes $z_{1}, z_{2}, \cdots, z_{l}$ of $q$. Then the corresponding cone spherical metric represents the divisor
\[ D =  \sum_{i=1}^{n} (a_{i} - 1)p_{i} + \sum_{j=1}^{l}\frac{m_{j}}{2}z_{j}, \]
which is equivalent to that the metric has cone angle $2 \pi a_{i}$ at $p_{i}$ and cone angle $\pi (m_{j} + 2)$ at $z_{j}$, respectively. Hence in order to obtain some explicit cone spherical metrics, we only need to construct some concrete Strebel differentials.

Note that if $q$ is a Strebel differential on $\mathbb{P}^{1}$ and $f: X \rightarrow \mathbb{P}^{1}$ is a branched covering such that the critical values of $f$ are in $\{$critical trajectories of $q\} \cup$Crit$(q)$, then $f^{*}(q)$ is also a Strebel differential on $X$.  Therefore, it is significant to obtain some explicit examples of Strebel differentials on $\mathbb{P}^{1}$. On the other hand, Mulase and Penkava give a construction of a Riemann surface $X$ and a Strebel differential by a metric ribbon graph in \cite{Mulase1998}. In particular, if the metric ribbon graph has rational ratios of the lengths, then the corresponding Strebel differential is a pullback by some Belyi map $f: X \rightarrow \mathbb{P}^{1}$ of the differential
\[ q_0 = -\frac{1}{4 \pi^2} \frac{dz^2}{z(z-1)^2}. \]
However, even on $X = \mathbb{P}^{1}$, we could {\it not} obtain the expressions of Strebel differentials by following their process. The purpose of this manuscript is to present an improvement of that result on $\mathbb{P}^{1}$ for a special case. That is, we will give the explicit expressions of Belyi maps and show that the Belyi maps have minimal degrees.

 We focus in this manuscript on the construction of Strebel differentials with $4$ double poles and residue vector $(1,1,1,1)$ on $\mathbb{P}^{1}$. Let $q$ be a meromorphic quadratic differential on the Riemann sphere with $4$ double poles at $0, 1, \lambda, \infty$ and residue vector $(1,1,1,1)$. Then we can express $q$ as
\begin{align}
\label{QExpression}
 q =q_{\lambda,\,\mu}= -\frac{dz^2}{4\pi^2} \biggl( \frac{1}{z^2} + \frac{1}{(z-1)^2} + \frac{1}{(z-\lambda)^2} + \frac{\mu -2z}{z(z-1)(z-\lambda)} \biggr),
\end{align}
where $\mu \in \mathbb{C}$ is a free complex parameter.
\begin{thm}
\label{thm:DoubleZeroes}
  Suppose that $q$ is a Strebel differential with form \eqref{QExpression} on $\mathbb{P}^{1}$. Then $q$ has either two double zeroes or four simple zeroes. $q=q_{\lambda,\,\mu}$ has two double zeroes if and only if $\lambda \in \mathbb{R} \backslash \{0, 1\}$ and
\begin{displaymath}
   \mu=\mu(\lambda) = \left\{
        \begin{array}{l}
        2\lambda+2, \qquad  \lambda<0;\\
        2 - 2\lambda, \qquad 0<\lambda<1;\\
        2\lambda-2, \qquad \lambda >1.
        \end{array}\right.
\end{displaymath}
In this case, the metric ribbon graph of $q_{\lambda,\,\mu(\lambda)}$ for any $\lambda\in \mathbb{R} \backslash \{0, 1\}$ can be realized by some $\lambda_0 \in [\frac{1}{2},1)$.
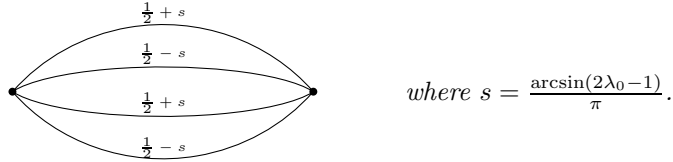
\begin{figure}[H]
  \begin{center}
    \begin{tikzpicture}
      \draw (2,0) arc (20:160:2.13cm and 0.5cm)
            (-2,0) arc (200:340:2.13cm and 0.5cm)
            (2,0) arc (40:140:2.61cm and 2.5cm)
            (-2,0) arc (220:320:2.61cm and 2.5cm);
      \fill (2,0) circle (1.5pt)
            (-2,0) circle (1.5pt);
      \node at (0,0.5) {\tiny{$\frac{1}{2}-s$}};
      \node at (0,-0.15) {\tiny{$\frac{1}{2}+s$}};
      \node at (0,1.05) {\tiny{$\frac{1}{2}+s$}};
      \node at (0,-0.75) {\tiny{$\frac{1}{2}-s$}};

      \node at (5,0) {\it where $s= \frac{\arcsin(2\lambda_0-1)}{\pi}$.};
    \end{tikzpicture}
  \end{center}
 \caption{\it The corresponding metric ribbon graphs if $\lambda_0 \in [\frac{1}{2},1)$.}
\end{figure}

\end{thm}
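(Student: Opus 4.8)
\emph{Proof idea.} The argument combines a degree count for the zeroes of $q$, the description of a Strebel critical graph as a metric ribbon graph with prescribed face perimeters, and the observation that when all zeroes of $q$ are even-order the $1$-form $\sqrt q$ is rational and exhibits $q$ as a pullback of the standard cylinder differential.

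Write $q=\phi(z)\,dz^2$ with $\phi=P(z)/\bigl(z^2(z-1)^2(z-\lambda)^2\bigr)$. From \eqref{QExpression} one checks that $P$ has degree exactly $4$, with leading coefficient $-1/(4\pi^2)$, and that $P(0)$, $P(1)$, $P(\lambda)$ are all nonzero; hence $q$ has exactly four zeroes counted with multiplicity, none of them at a pole, and the zero type of $q$ is a partition of $4$. Since $q$ is Strebel, its critical graph $\Gamma$ is a connected metric ribbon graph on $S^2$ whose four faces are the half-infinite cylinders around $0,1,\lambda,\infty$, each of perimeter equal to the corresponding residue $1$; a zero of order $m$ becomes a vertex of valence $m+2$, the Euler relation reads $E=V+2$, and $\sum_e\ell_e=2$. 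Running through the finitely many connected ribbon graphs on $S^2$ with four faces and $E=V+2$, one verifies that for the zero types $(2,1,1)$, $(3,1)$, $(4)$ the requirement that all four face perimeters be equal forces some edge length to vanish, whereas for type $(2,2)$ it can be met only by the ``banana'' graph with two $4$-valent vertices joined by four edges, where the perimeter equations collapse to $\ell_1=\ell_3$, $\ell_2=\ell_4$, $\ell_1+\ell_2=1$ --- exactly Figure~1 with $\ell_1=\tfrac12-s$. This proves the dichotomy and pins down the ribbon graph in the two-double-zero case.

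Now suppose $q$ has two double zeroes $z_1\neq z_2$, so $P=-\tfrac{1}{4\pi^2}(z-z_1)^2(z-z_2)^2$. Matching coefficients with \eqref{QExpression}, in terms of $e_1=z_1+z_2$ and $e_2=z_1z_2$, gives $e_1=1+\lambda-\mu/2$, $e_2^2=\lambda^2$, and, after eliminating $z_1,z_2$, the single relation $\mu\in\{\,2\lambda-2,\ 2-2\lambda,\ 2\lambda+2\,\}$. For each such $\mu$ the form $\omega:=\sqrt q$ is rational, with four simple poles (at $0,1,\lambda,\infty$), purely imaginary residues $\pm i/(2\pi)$, and double zeroes at $z_1,z_2$; its primitive has the form $F=\int\omega=\tfrac{i}{2\pi}\log w$ for an explicit degree-$2$ rational function $w$ --- one of $\tfrac{z(z-1)}{z-\lambda}$, $\tfrac{z(z-\lambda)}{z-1}$, $\tfrac{(z-1)(z-\lambda)}{z}$, depending on the branch --- so that $q=w^{*}\bigl(-\tfrac{1}{4\pi^2}\,dw^2/w^2\bigr)$. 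The horizontal trajectories of $-\tfrac{1}{4\pi^2}\,dw^2/w^2$ are the circles $\{|w|=c\}$, and pulling back along $w$ one sees that $q$ is a genuine Strebel differential (so that $S^2\setminus\Gamma$ consists of four half-cylinders) if and only if the two critical values $w(z_1)$, $w(z_2)$ lie on one common circle $\{|w|=c\}$; otherwise a finite intermediate ring domain appears and $q$ is merely Jenkins--Strebel. A direct computation shows that this common-circle condition holds precisely when $\lambda\in\mathbb R\setminus\{0,1\}$ and $\mu$ is the branch named in the theorem; for instance, for $0<\lambda<1$ and $\mu=2-2\lambda$ one has $w=\tfrac{z(z-1)}{z-\lambda}$ with critical values $(2\lambda-1)\pm 2i\sqrt{\lambda(1-\lambda)}$, both of modulus $1$. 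Conversely, if $q_{\lambda,\mu}$ is Strebel with two double zeroes, then by the first part its critical graph is the banana of Figure~1 for some $s\in[0,\tfrac12)$; since (Mulase--Penkava) a metric ribbon graph determines the Strebel differential up to isomorphism, with the residue vector invariant, and since by the next paragraph every such banana metric is already realized by a real $\lambda$, the uniqueness theorem for Strebel differentials forces $\lambda\in\mathbb R\setminus\{0,1\}$ and $\mu=\mu(\lambda)$.

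Finally, for $\lambda_0\in[\tfrac12,1)$ and $\mu=2-2\lambda_0$ I read off the edge lengths of $\Gamma$ from $F=\tfrac{i}{2\pi}\log w$ with $w=\tfrac{z(z-1)}{z-\lambda_0}$. Writing the common critical value as $w(z_1)=e^{i\theta}$, so that $\theta=\arccos(2\lambda_0-1)$, the integral of $\omega$ along an edge joining $z_1$ to $z_2$ lies in $\tfrac{\theta}{\pi}+\mathbb Z$; combined with the relations $\ell_i+\ell_{i+1}=1$ coming from the four bigon faces, this forces the edge lengths to be $\tfrac{\theta}{\pi}$ and $1-\tfrac{\theta}{\pi}$, each occurring twice. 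Since $\arccos x=\tfrac{\pi}{2}-\arcsin x$, these are exactly $\tfrac12\pm s$ with $s=\tfrac{\arcsin(2\lambda_0-1)}{\pi}$, which is Figure~1, and $s$ sweeps $[0,\tfrac12)$ as $\lambda_0$ sweeps $[\tfrac12,1)$. For an arbitrary real $\lambda$, the substitutions $\lambda\mapsto 1-\lambda$ and $\lambda\mapsto 1/\lambda$ permute the four poles, hence induce isometries of metric ribbon graphs preserving the residue vector $(1,1,1,1)$, and carry $\lambda$ into the orbit of some $\lambda_0\in[\tfrac12,1)$ --- the last assertion. The main obstacle, I expect, is exactly the Strebel-versus-Jenkins--Strebel distinction above: the correct zero structure and the reality of the periods hold for \emph{every} branch of $\mu$, and it is only the requirement that the two critical values lie on a common circle (equivalently, the absence of an intermediate ring domain) that selects $\lambda\in\mathbb R$ and the precise form of $\mu(\lambda)$.
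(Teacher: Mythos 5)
Your proposal is correct in outline and reaches the right answer, but its engine is genuinely different from the paper's. The paper gets the dichotomy purely algebraically: the discriminant of the quartic numerator factors as $\lambda^2(\lambda-1)^2(\mu-2+2\lambda)^2(\mu-2-2\lambda)^2(\mu+2-2\lambda)^2$, and in each degenerate case the quartic is visibly a perfect square, so the types $(2,1,1)$, $(3,1)$, $(4)$ never occur for \emph{any} $\mu$, Strebel or not. You instead exclude them by enumerating metric ribbon graphs with four perimeter-one faces; this is true but is a nontrivial finite case check that you only assert, and it is the one step you would really have to write out (or replace by the two-line discriminant computation). For the construction itself, the paper pulls back $q_0$ along the degree-four maps $x^4$ and $\tfrac{1}{c^2}x^2(x-1)^2$, normalizes by M\"obius maps, and propagates to all real $\lambda$ via $\lambda\mapsto 1-\lambda$ and $\lambda\mapsto 1/\lambda$; you observe that when all zeroes are even, $\sqrt q=\tfrac{i}{2\pi}\,d\log w$ for a degree-two rational $w$, so $q=w^{*}\bigl(-\tfrac{1}{4\pi^2}dw^2/w^2\bigr)$ and the Strebel condition becomes ``the two critical values of $w$ lie on one circle.'' This is cleaner and more conceptual: it treats all complex $\lambda$ at once, explains why exactly three values of $\mu$ arise (the three ways to split the four poles into zeros and poles of $w$), and yields the edge lengths $\tfrac{\arccos(2\lambda_0-1)}{\pi}=\tfrac12-s$ from arc lengths on the unit circle rather than from the paper's integral $\int_0^{1/(16c^2)}\sqrt{q_0}$; I checked your critical-value computation and it is right. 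Your closing step --- exhausting the banana metrics by $\lambda_0\in[\tfrac12,1)$ and invoking Harer/Mulase--Penkava uniqueness for the converse --- coincides with the paper's, and it correctly makes the unproved ``precisely when'' claim about the common-circle condition redundant.
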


As a consequence, suppose the residues of the Strebel differential $q$ are all equal. Then $q$ has $4$ simple zeroes if and only if $\lambda \in \mathbb{C \setminus R}$,  i.e., the double poles of $q$ are non-coaxial. Unfortunately,  we have not yet obtained all the explicit expressions of $q$ in this general case. Denote by $q \sim q'$ if two differentials $q$ and $q'$ coincide  up to a non-zero constant multiple. Then we have
\begin{thm}
\label{thm:GeneralCase}
  Let $q$ be a Strebel differential on $\mathbb{P}^{1}$ with four simple zeroes and residue vector $(1,1,1,1)$. Then the metric ribbon graph of $q$ coincides with the following graph
  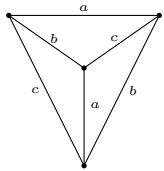
\begin{figure}[H]
  \begin{center}
    \begin{tikzpicture}
      \draw (0,0.3)--(1,1)
            (0,0.3)--(0,-1)
            (0,0.3)--(-1,1)
            (1,1)--(0,-1)
            (0,-1)--(-1,1)
            (-1,1)--(1,1);
       \fill(0,0.3) circle (1pt)
            (1,1) circle (1pt)
            (0,-1) circle (1pt)
            (-1,1) circle (1pt);
      \node at (0,1.1) {\tiny{$a$}};
      \node at (0.15,-0.2) {\tiny{$a$}};
      \node at (-0.4,0.7) {\tiny{$b$}};
      \node at (0.65,0) {\tiny{$b$}};
      \node at (0.4,0.7) {\tiny{$c$}};
      \node at (-0.65,0) {\tiny{$c$}};
    \end{tikzpicture}
  \end{center}
  \caption{\it Metric ribbon graphs with residues $(1,1,1,1)$ and $4$ simple zeroes.}
  \label{fig:Generalcase}
  \end{figure}
\noindent where $a,b,c>0$ and $a+b+c=1$. Suppose that $a,b,c \in \mathbb{Q}_{>0}$ and $a+b+c=1$. Then there exists a Belyi map $f$ such that $f^*(q_0) \sim q$ and $f$ could be decomposed to be $f = g \circ x^2$ with another Belyi map $g$. Furthermore, if $d$ is the minimal positive integer such that $da,\, db$ and $dc$ are all integers, then
    \[\min\,\left\{\deg f \mid \text{$f$ is a Belyi map and\ }f^*(q_0)\sim q\right\}=\begin{cases}
    2d &\text{if}\ 2\mid d,\\
    4d &\text{if}\ 2\nmid d,
    \end{cases}\]
and we obtain the explicit expressions of Belyi maps and Strebel differentials when $(a,b,c)$ equals one of the following five triples{\rm :}
\[ \Big(\frac{1}{2}, \frac{1}{4}, \frac{1}{4}\Big), \Big(\frac{1}{3}, \frac{1}{3}, \frac{1}{3}\Big), \Big(\frac{1}{3}, \frac{1}{6}, \frac{1}{2}\Big), \Big(\frac{1}{3}, \frac{1}{2}, \frac{1}{6}\Big), \Big(\frac{2}{3}, \frac{1}{6}, \frac{1}{6}\Big). \]
which exhaust all the possibilities such that the Belyi maps have minimal degrees equal to either $8$ or $12$.
\end{thm}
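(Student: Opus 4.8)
\emph{Overview of the plan.} I would carry this out in four steps: (1) pin down the metric ribbon graph combinatorially; (2) for rational $a,b,c$, use the Mulase--Penkava dictionary to reduce to a question about Belyi maps and dessins, and read off the admissible ramification data; (3) deduce the minimal degree and the factorization $f=g\circ x^{2}$; (4) treat the five exceptional triples directly. For step (1): a Strebel differential on $\mathbb{P}^{1}$ with four double poles has zeros of total order $2\cdot 4-4=4$, so ``four simple zeros'' forces the critical graph to be trivalent with $V=4$ vertices; as $2E=3V$ gives $E=6$ and there are $F=4$ faces (one per double pole), Euler's formula for the sphere shows the graph is connected. Among the finitely many connected trivalent ribbon graphs with these invariants I would keep only $K_{4}$ (in its unique planar embedding), using the residue hypothesis: all four face perimeters equal $1$, so $\sum_{e}\ell(e)=\tfrac12\sum_{i}\mathrm{per}(F_{i})=2$, and a short case analysis shows that any multiple edge (which forces a bigon face) or loop (which forces a monogon face) makes the perimeter equations incompatible with positivity of all edge lengths. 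For $K_{4}$ the four triangle-perimeter equations form a rank-$3$ linear system whose solutions are exactly the configurations with opposite edges equal, of lengths $a,b,c$, satisfying $a+b+c=1$; positivity of an arbitrary such triple and realizability of the corresponding differential are Strebel's existence-and-uniqueness theorem. This gives the graph of Figure~\ref{fig:Generalcase}.

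Now assume $a,b,c\in\mathbb{Q}_{>0}$. By Mulase--Penkava the Strebel surface is $\mathbb{P}^{1}$ and carries a Belyi map $f\colon\mathbb{P}^{1}\to\mathbb{P}^{1}$, ramified only over $\{0,1,\infty\}$, with $f^{*}q_{0}\sim q$. I would determine the possible ramification by the standard local analysis of $f^{*}q_{0}$: since $q_{0}$ has a double pole at $1$, simple poles at $0$ and $\infty$, and no zeros, a point over $1$ of local degree $j$ is a double pole of $f^{*}q_{0}$ with residue proportional to $j$ (one constant for all such points), while a point over $0$ or $\infty$ of local degree $j$ is a simple pole if $j=1$, a smooth point if $j=2$, and a zero of order $j-2$ if $j\ge 3$. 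Since $q$ has residue vector $(1,1,1,1)$ and exactly four double poles, the fibre over $1$ consists of four points of a common local degree $k$, so $\deg f=4k$ and the scaling constant is $c_{0}=k^{-2}$; since $q$ has four simple zeros and no simple poles, the fibres over $0$ and over $\infty$ use only local degrees $2$ (smooth points) and $3$ (the simple zeros of $q$). Finally $f$ is a local isometry from the $q$-metric to $\sqrt{c_{0}}=1/k$ times the $q_{0}$-metric, so each edge of the graph covers, as a folded covering, the single edge of the metric ribbon graph of $q_{0}$, which has rescaled length $\tfrac{1}{2k}$; hence $a,b,c\in\tfrac{1}{2k}\mathbb{Z}_{>0}$, i.e.\ $2ka,\,2kb,\,2kc\in\mathbb{Z}$.

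The condition $2ka,2kb,2kc\in\mathbb{Z}$ is equivalent to $d\mid 2k$, whose least positive solution is $k=d/2$ if $2\mid d$ and $k=d$ if $2\nmid d$; combined with $\deg f=4k$ this yields the asserted lower bounds $2d$ and $4d$. For the matching upper bound and the factorization I would use the order-two automorphism $\sigma$ of the labelled graph $K_{4}(a,a,b,b,c,c)$ that interchanges the two edges inside two of the three opposite pairs and fixes the two edges of the third pair: $\sigma$ is an automorphism of the metric ribbon graph, hence is realized by a conformal involution of the Strebel surface whose two fixed points are the midpoints of the two $\sigma$-fixed edges; the quotient is a $\mathbb{P}^{1}$ carrying a quadratic differential $q'$ whose metric ribbon graph is the $\sigma$-quotient of that of $q$, still with rational edge lengths (halved along the fixed pair). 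Applying Mulase--Penkava to $q'$ gives a Belyi map $g$ with $g^{*}q_{0}\sim q'$, and $f:=g\circ x^{2}$, with $x^{2}$ the quotient map $x\mapsto x^{2}$ in suitable coordinates, satisfies $f^{*}q_{0}\sim q$; a bookkeeping of ramification (now $\deg g=2k$) shows the minimal $k$ is attained. The part that requires real care -- the main obstacle -- is that the ramification passport over $\{0,1,\infty\}$ does not determine the dessin: I must produce a Belyi map whose associated metric ribbon graph is exactly the prescribed labelled $K_{4}$ (and not some other trivalent graph with the same passport), of exactly the minimal degree. I would do this by building the dessin directly from the strip decomposition of $q$, which has the correct graph built in, and by invoking the residue-and-length constraints of step (2) to certify that no smaller degree is possible.

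For step (4), $\deg f\in\{8,12\}$ occurs precisely when $d\in\{2,3,4,6\}$; enumerating the partitions of $d$ into three positive parts whose reduced denominators have least common multiple $d$ -- and recalling that the metric ribbon graph distinguishes a cyclic order of $(a,b,c)$ from its reverse, which is why both $(\tfrac13,\tfrac16,\tfrac12)$ and $(\tfrac13,\tfrac12,\tfrac16)$ occur while other reorderings do not -- leaves exactly the five triples in the statement. For each I would solve for $f$ from its passport over $\{0,1,\infty\}$ (the dessin selecting the correct one among finitely many candidates), recover $g$ from the factorization, and write $q=c_{0}\,f^{*}q_{0}$ explicitly; this is a finite but lengthy computation that the sketch need not perform.
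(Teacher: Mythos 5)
Your structural argument is sound and follows the same skeleton as the paper (determine the ribbon graph, translate rational edge lengths into a dessin, count segments to get the minimal degree, exploit a $\mathbb{Z}/2$ symmetry for the factorization), but two of your steps take a genuinely different and arguably cleaner route. For the lower bound on $\deg f$ you quantize lengths: each edge of the critical graph of $f^{*}q_0$ covers the single critical edge of $q_0$, so $a,b,c\in\tfrac{1}{2k}\mathbb{Z}$ and $d\mid 2k$; the paper (Proposition \ref{prop:MinimalDegree}) instead counts segments on the dessin and, for odd $d$, invokes the parity obstruction that a bipartite subdivision of a triangle must have an even number of segments on its boundary. The two arguments are equivalent, but yours handles both parities at once. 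For the factorization $f=g\circ x^{2}$ you realize the ribbon-graph automorphism $(01)(23)$ (swapping the edges within two opposite pairs, reversing the edges of the third) as a holomorphic involution via the uniqueness in Strebel's theorem and pass to the quotient; the paper (Proposition \ref{prop:BelyiFactor}) argues directly that the dessin of $f$ is the pullback by $x^{2}$ of a smaller dessin. Your route buys a conceptual explanation for the symmetry, at the cost of applying the Mulase--Penkava Belyi construction to the quotient differential $q'$, which has two simple poles besides its double poles---this is fine (so does $q_0$ itself) but should be said. Two small slips: a loop need not bound a monogon (pass to an innermost one, as the paper does in Lemma \ref{lem:PossibleGraphs}), and $d=2$ cannot occur since three positive integers cannot sum to $2$, so the relevant set is $d\in\{3,4,6\}$; neither affects the conclusion.

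The one substantive omission is the last clause of the theorem: the explicit expressions of the Belyi maps and Strebel differentials for the five triples are part of the assertion, and they are not a routine computation that a sketch can defer. For $\bigl(\tfrac13,\tfrac16,\tfrac12\bigr)$, $\bigl(\tfrac13,\tfrac12,\tfrac16\bigr)$, $\bigl(\tfrac23,\tfrac16,\tfrac16\bigr)$ the three dessins share a single passport; the degree-$6$ factor $g$ is governed by the sextic $P(t)=(t+1)^{6}+16t^{3}$, which must be shown irreducible over $\mathbb{Q}$ (so that the construction is nondegenerate) and then solved by radicals, and one must decide which Galois-conjugate root corresponds to which labelled ribbon graph---the paper does this last step with a complex-conjugation argument in Example \ref{ex:degree12}. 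Your plan to ``solve for $f$ from its passport, the dessin selecting the correct one among finitely many candidates'' names the issue but does not say how the selection is made; that is where most of the actual work of this part of the proof lives.
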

In \cite{Mulase2001},  Mulase and Penkava  conjectured that {\it if there exists a Strebel differential $q$ on $X$ such that all lengths of critical trajectories of $q$ are algebraic but not rational under $q$-metric, then the pointed Riemann surface $\big(X, (p_{1}, p_{2}, \cdots, p_{n})\big)$ could not be defined over $\overline{\mathbb{Q}}$}. The examples of Strebel differentials we construct provide more evidences for this conjecture. As an application on cone spherical metrics, we have
\begin{cor}
\label{cor:metric}
The moduli space of cone spherical metrics with four conical singularities of angles $3\pi, 3\pi, 3\pi, 3\pi$ on $\mathbb{P}^{1}$ has a subspace homeomorphic to the quotient space of the triangle region $\{(a,b,c)\in {\Bbb R}\mid \, a,b,c>0,\, a+b+c=1\}$ by the group $\mathbb{Z}/3\mathbb{Z}$ generated by the cyclic transformation $(a,b,c)\mapsto (b,c,a)$. %We also find the explicit expressions of the five elliptic metrics as $(a,b,c)$ equals to one of the five triples in Theorem \ref{thm:GeneralCase}.
\end{cor}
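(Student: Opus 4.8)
The plan is to exhibit the claimed subspace as the image of an explicit continuous injection $\Phi\colon T\to M$, where $T:=\{(a,b,c)\in\mathbb{R}^3 : a,b,c>0,\ a+b+c=1\}/(\mathbb{Z}/3\mathbb{Z})$ and $M$ denotes the moduli space appearing in the statement, and then to verify that $\Phi$ is a homeomorphism onto its image. To build $\Phi$, for a representative $(a,b,c)$ I would equip the graph of Figure~\ref{fig:Generalcase} with the indicated edge lengths and feed this metric ribbon graph into the Mulase--Penkava construction \cite{Mulase1998}: this produces a compact Riemann surface, necessarily $\mathbb{P}^1$ since $V-E+F=4-6+4=2$, together with a Strebel differential $q_{a,b,c}$ whose four faces each have perimeter $a+b+c=1$ (so its residue vector is $(1,1,1,1)$) and whose four trivalent vertices are simple zeroes. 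Applying to $q_{a,b,c}$ the construction of \cite{Song2017} recalled in the Introduction yields a cone spherical metric $h_{a,b,c}$ on $\mathbb{P}^1$ whose only conical points are the four simple zeroes of $q_{a,b,c}$, each of angle $\pi(1+2)=3\pi$; thus $[h_{a,b,c}]\in M$, and I set $\Phi\big([(a,b,c)]\big):=[h_{a,b,c}]$.

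Next I would check that $\Phi$ is well defined on $T$ and identify its image. The graph of Figure~\ref{fig:Generalcase} is $K_4$ with its tetrahedral ribbon structure, the three edge labels indexing the three perfect matchings; its orientation-preserving automorphism group is the rotation group $A_4$ of the tetrahedron, which permutes the matchings through the surjection $A_4\twoheadrightarrow\mathbb{Z}/3\mathbb{Z}$ (the normal Klein four-subgroup acting trivially on labels). Hence $(a,b,c)$ and $(a',b',c')$ give isomorphic metric ribbon graphs iff they differ by a cyclic permutation, and such an isomorphism realizes a biholomorphism of $\mathbb{P}^1$ carrying $q_{a,b,c}$ to $q_{a',b',c'}$, hence an isometry $h_{a,b,c}\cong h_{a',b',c'}$; so $\Phi$ descends to $T$. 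Conversely, Theorem~\ref{thm:GeneralCase} tells us every Strebel differential on $\mathbb{P}^1$ with four simple zeroes and residues $(1,1,1,1)$ has a metric ribbon graph of this shape, so — after normalizing the common face perimeter to $1$ and quotienting by $\mathrm{Aut}(\mathbb{P}^1)$ — sending such a differential to its metric ribbon graph is a bijection onto $T$; thus $\Phi(T)$ is precisely the set of classes of cone spherical metrics arising via \cite{Song2017} from these Strebel differentials.

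The core of the argument is the injectivity of $\Phi$: I must show that $h_{a,b,c}$ determines $q_{a,b,c}$, hence its metric ribbon graph, hence $(a,b,c)$ up to cyclic permutation. The cleanest route uses that $h_{a,b,c}$ is co-axial (reducible): in the construction of \cite{Song2017} a developing map may be taken to be $f=\cot\!\big(\pi\!\int\!\sqrt{q_{a,b,c}}\big)$, and since the monodromy of $\int\!\sqrt{q_{a,b,c}}$ is generated by adding real periods and the (real) residues at the double poles, the monodromy of $f$ fixes the two points $\pm i\in\mathbb{P}^1$, so the monodromy group of $h_{a,b,c}$ lies in a maximal torus of $\mathrm{PSU}(2)$. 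A co-axial cone spherical metric canonically determines a meromorphic one-form on the surface (take $d\log$ of the monodromy-invariant M\"obius combination of $f$), and tracing through the construction recovers $\sqrt{q_{a,b,c}}$ up to sign, hence $q_{a,b,c}$. Alternatively one can recover the critical graph of $q_{a,b,c}$ inside $(\mathbb{P}^1,h_{a,b,c})$ as a distinguished finite union of geodesic arcs joining the four conical points — the sub-arcs of the ``waists'' of the spherical footballs glued in by the construction — and read off $a,b,c$ as their lengths. In either approach the delicate point, which I expect to be the main obstacle, is to verify that this recovery is genuinely canonical, i.e.\ independent of the choices entering the construction of \cite{Song2017}.

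Finally I would show $\Phi$ is a homeomorphism onto $\Phi(T)$. Continuity of $\Phi$ is clear: the Mulase--Penkava correspondence between metric ribbon graphs and pairs (Riemann surface, Strebel differential) is continuous \cite{Mulase1998}, and the construction of \cite{Song2017} is given by formulas depending continuously on the differential. For the inverse, the recovery procedure of the previous paragraph is itself continuous — the lengths of the distinguished geodesic arcs, or the coefficients of the recovered one-form, vary continuously with the metric — so $\Phi^{-1}\colon\Phi(T)\to T$ is continuous; equivalently one checks that $\Phi$ is proper (if $[(a^{(k)},b^{(k)},c^{(k)})]$ leaves every compact subset of $T$, some edge length tends to $0$, two simple zeroes of $q_{a^{(k)},b^{(k)},c^{(k)}}$ collide, hence two conical points of $h^{(k)}$ collide, and no subsequence of $h^{(k)}$ can converge in $M$), whence $\Phi$ is a closed embedding. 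Either way $\Phi(T)$ is a subspace of $M$ homeomorphic to $T$, which is exactly the assertion of the Corollary.
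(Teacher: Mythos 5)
Your overall architecture is the same as the paper's: identify the Strebel differentials with four simple zeroes and residue vector $(1,1,1,1)$ with the metric ribbon graphs of Figure \ref{fig:Generalcase} via the correspondence \eqref{HCorrespondence}, observe that the relevant automorphism group acts on $(a,b,c)$ by cyclic permutation, and push everything forward through the construction of \cite{Song2017}. Your analysis of the ribbon-graph automorphisms via $A_4\twoheadrightarrow\mathbb{Z}/3\mathbb{Z}$ (with the Klein four-subgroup acting trivially on the three pairs of opposite edges) is a correct and more precise version of the paper's bare assertion that the group is $\mathbb{Z}/3\mathbb{Z}$, and your continuity/properness discussion supplies detail the paper leaves implicit in the statement that \eqref{HCorrespondence} is an orbifold isomorphism.

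The genuine gap is exactly where you flagged it: injectivity. You propose to show that the metric $h_{a,b,c}$ canonically determines $q_{a,b,c}$, either through the co-axial monodromy and a recovered meromorphic one-form or by exhibiting the critical graph as a distinguished union of geodesic arcs, and you concede that you cannot yet verify this recovery is independent of the choices entering \cite{Song2017}. That reconstruction problem is much harder than what the corollary requires, and the paper sidesteps it with an elementary observation: because all residues equal $1$, the divisor of the metric is $D=\sum_{j=1}^4\tfrac12 z_j$, so the conical points of the metric are precisely the zeroes of $q$; and within the normalized family \eqref{QExpression} the zero set determines $(\lambda,\mu)$ (from the symmetric functions of the roots of the numerator, whose constant term is $\lambda^2$), hence determines $q$. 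Thus distinct Strebel differentials in the family already yield metrics with distinct singular sets, and injectivity follows without ever inverting the metric construction. Replacing your third paragraph with this argument closes the gap; the remainder of your outline can stand.
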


The organization of this manuscript is as follows. In Section \ref{sec:pre}, for the convenience of readers, we recall in detail the existence theorem of Strebel differentials and the correspondence between Strebel differentials and metric ribbon graphs. As an application, we give the proof of Corollary \ref{cor:metric}. The proof of Theorem \ref{thm:DoubleZeroes} occupies the whole of Section \ref{sec:2zeroes}. We prove Theorem \ref{thm:GeneralCase} in Section \ref{sec:4zeroes} .

\section{Preliminaries}
\label{sec:pre}
In this section, we will recall some basic results about Strebel differentials, such as the existence theorem/definition given by K. Strebel, Harer's one-to-one correspondence between pointed compact Riemann surfaces with Strebel differentials and metric ribbon graphs. For more details, one can see \cite{Mulase1998}.

\begin{thm}
\label{thm:Strebel}
{\rm (\cite[Theorem 23.5]{Strebel1984})}
Let $X$ be a compact Riemann surface of genus $g$ with $n$ marked points $p_1, p_2, \cdots, p_n$, and $a_1,\cdots, a_n \in \mathbb{R}_{>0}$. If $2 - 2g - n < 0$, then there exists a unique quadratic differential $q \in H^{0}(X, \Omega_{X}^{\otimes}(2p_{1} + 2p_{2} + \cdots + 2p_{n}))$ such that
\begin{enumerate}
\item $p_{i}$ is a double pole with residue $a_{i}$ of $q$ for $i = 1, 2, \cdots, n$.
\item The union of all non-closed trajectories is a set of measure zero.
\item Every closed trajectory is a circle around some $p_{i}$.
\end{enumerate}
\end{thm}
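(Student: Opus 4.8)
\noindent\emph{Proof proposal.}
Theorem~\ref{thm:Strebel} is Strebel's classical existence-and-uniqueness theorem, so the plan is to reproduce its proof along one of the two standard lines: the extremal-metric method originating with Jenkins \cite{Jenkins1957} and developed by Strebel \cite{Strebel1984}, or the direct variational argument of Arbarello and Cornalba \cite{Arbarello2010}. It is convenient to fix the normalization behind condition~(1): ``$p_i$ is a double pole with residue $a_i$'' means that in a local coordinate $z$ vanishing at $p_i$ one has $q=-\tfrac{a_i^{2}}{4\pi^{2}}\,\tfrac{dz^{2}}{z^{2}}\bigl(1+O(z)\bigr)$, equivalently, in the natural coordinate $w=\tfrac{a_i}{2\pi i}\log z$ of $q$ the point $p_i$ lies at the infinite end of a flat half-cylinder of circumference $a_i$. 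Granting this, conditions (2)--(3) say exactly that $X\setminus\{p_1,\dots,p_n\}$ is cut, up to a set of measure zero, into $n$ such half-cylinders $C_1,\dots,C_n$ (with $C_i$ a maximal ring domain around $p_i$) glued along the critical graph of $q$. I would prove uniqueness first, it being the softer half, and then existence.

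\noindent\emph{Uniqueness.} If $q_1$ and $q_2$ both satisfy (1)--(3), I would argue by a length--area comparison, i.e.\ the Minimal Norm Property of \cite{Strebel1984}. On the maximal ring domain $C_i^{(1)}$ swept out by the closed $q_1$-trajectories around $p_i$, write $q_2=\varphi_i(w)\,dw^{2}$ in the natural coordinate $w$ of $q_1$; the residue condition makes $\varphi_i$ holomorphic with $\varphi_i\to1$ at $p_i$. Since $q_2$ is Strebel with the same residue, every loop around $p_i$ has $q_2$-length at least $a_i$ (the closed $q_2$-trajectories being shortest in their homotopy class), so applying Cauchy--Schwarz to the horizontal $q_1$-circles $\{\operatorname{Re}w=u\}$ gives $\int_0^{a_i}|\varphi_i(u+iv)|\,dv\ge a_i$, with equality forcing $\varphi_i\equiv1$. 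Comparing the two cylinder decompositions of $X$ this way, and using the maximality of the $C_i^{(1)}$ together with the fact that the critical graphs have measure zero, one concludes that the decompositions coincide, hence $q_1=q_2$. (Equivalently, one invokes the equality clause of Jenkins' general coefficient theorem, of which both $q_1$ and $q_2$ are extremals.)

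\noindent\emph{Existence.} This is the substantive half. One route recasts the problem as a module (extremal) problem for ring domains: among systems of mutually disjoint ring domains $R_i\subset X\setminus\{p_1,\dots,p_n\}$, $R_i$ in the homotopy class of a small loop about $p_i$, one solves the associated extremal problem of \cite{Strebel1984} for the weights $a_i$; a normal-families argument supplies an extremal system, Jenkins' general coefficient theorem furnishes its characteristic quadratic differential, and one checks directly that this differential is meromorphic on $X$ with double poles of residue $a_i$, that its closed trajectories exactly fill the extremal ring domains (so (3) holds), and that the remaining non-closed trajectories form a set of measure zero (so (2) holds). A second route, following Arbarello--Cornalba, is variational: on the finite-dimensional affine space of meromorphic quadratic differentials on $X$ that are holomorphic off the $p_i$ and have residue $a_i$ at $p_i$, one studies a convex ``renormalized area'' functional whose unique minimizer is the sought $q$, reading uniqueness off the convexity and identifying the minimizer through its first variation. \emph{The main obstacle is precisely this existence step:} one must establish compactness in a suitable space over the non-compact surface $X\setminus\{p_1,\dots,p_n\}$ while pinning down the prescribed singular behaviour at the double poles (this is where the coercivity of the functional, resp.\ the non-degeneracy of the extremal ring-domain system, enters), and then promote the extremal object to a genuine meromorphic quadratic differential with no recurrent trajectories, i.e.\ make (1)--(3) hold simultaneously. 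By contrast, the uniqueness argument above and the combinatorial dictionary between Strebel differentials and metric ribbon graphs recalled in the rest of this section are comparatively formal.
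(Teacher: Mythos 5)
The paper does not prove this statement at all: it is imported verbatim as \cite[Theorem 23.5]{Strebel1984}, and everything in Section~\ref{sec:pre} downstream of it (the metric ribbon graph dictionary, Harer's correspondence) treats it as a black box. So there is no in-paper proof to compare against, and the only fair question is whether your outline is a sound reconstruction of the classical argument. In outline it is: the uniqueness half via the length--area inequality applied to the cylinder decomposition is the standard ``minimal norm property'' argument, and your existence sketch correctly identifies the two canonical routes (Jenkins' extremal ring-domain problem with the general coefficient theorem, or the Arbarello--Cornalba convex functional). You also correctly locate where the real work lies.

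That said, as a proof it is incomplete in exactly the place you flag. In the uniqueness half, the inequality ``every loop around $p_i$ has $q_2$-length at least $a_i$'' is asserted rather than proved; it is itself one of Strebel's minimal-length theorems and needs the full structure theory of the trajectories of $q_2$ (in particular that the closed trajectories around $p_i$ foliate a maximal annulus and are geodesic representatives of their free homotopy class). And the existence half is a description of a strategy, not an argument: the normal-families compactness for the extremal ring-domain system, the non-degeneracy of the extremal configuration (no $R_i$ collapsing), and the verification that the extremal differential has precisely a double pole with residue $a_i$ at each $p_i$ and no recurrent trajectories are all named but not carried out, and each is a substantial step in \cite{Strebel1984} or \cite{Arbarello2010}. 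Since the paper itself delegates all of this to the literature, your proposal is an acceptable account of the theorem's provenance, but it should be presented as a citation-plus-outline rather than as a proof.
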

Then the quadratic differential $q$ is called a {\it Strebel differential}. In \cite{Strebel1984},  K. Strebel also proved that the closure of any recurrent trajectory is a subset of $X$ of positive measure. Hence, the second condition in Theorem \ref{thm:Strebel} is equivalent to say $q$ has no recurrent trajectories. If $\{U, z\}$ is a local coordinate around $p_{i}$ with $z(p_{i}) = 0$, then the expression of $q$ on $U$ is
\[ \Big( -\frac{a_{i}^{2}}{z^{2}} + \frac{b_{i}}{z} + h_{i}(z) \Big)dz^{2}, \]
where $h_{i}(z)$ is a holomorphic function on $U$. By the third condition, we know that, for each $p_{i}$, the union of all closed trajectories around $p_{i}$ is an open punctured disc and $p_{i}$ is the center of the disc.

At a zero of multiplicity $m$ of $q$, there are $m + 2$ half critical trajectories emanating from the zero. Moreover, the critical graph constituted by critical trajectories and the zeroes of $q$ is connected. Each edge of the critical graph has a length measured by $q$-metric. Hence we obtain a connected metric graph $\Gamma$ drawn on $X$, which is called a {\it metric ribbon graph}. Moreover,  the cell decomposition of $X$ induced by $\Gamma$ has $n$ discs. The number $n$ is called the {\it number of boundary components} of $\Gamma$. Note that, at any vertex $v$ of $\Gamma$,  the orientation of $X$ induces a cyclic ordering of the half edges incident to $v$. As an example, a metric ribbon graph on $\mathbb{P}^{1} = \mathbb{C} \cup \{ \infty \}$ is nothing but a planar metric graph with natural cyclic ordering at each vertex.

Given a metric ribbon graph $\Gamma$,  Mulase and Penkava \cite[Theorem 5.1]{Mulase1998} proved that there exists a Riemann surface $X$ and a Strebel differential $q$ on it such that $\Gamma$ coincides with the critical graph of $q$. Therefore, there exists a correspondence between Riemann surfaces with Strebel differentials and metric ribbon graphs. Furthermore, Harer \cite{Harer1988} proved that this correspondence is actually an orbifold isomorphism
\begin{align}
\label{HCorrespondence} 
\begin{split}
  \mathfrak{M}_{g,n} \times \mathbb{R}^n_{>0} \rightarrow \coprod_{\Gamma} \frac{\mathbb{R}^{e(\Gamma)}_{>0}}{{\rm Aut}_{\partial}(\Gamma)}, \\
   (X,(p_1, p_2, \cdots, p_n)) \times (a_1, a_2, \cdots, a_n) \mapsto   \Gamma.
\end{split}
\end{align}
where $\mathfrak{M}_{g,n}$ is the moduli space of compact Riemann surfaces of genus $g$ with $n$ marked ordered points, $\Gamma$ runs over all ribbon graphs with degree of each vertex $\geq 3$ and with $n$ boundary components, and ${\rm Aut}_{\partial}(\Gamma)$ is the automorphism group of the ribbon graph $\Gamma$. Then combining the correspondence \eqref{HCorrespondence} and Theorem \ref{thm:GeneralCase}, we could give the proof of Corollary \ref{cor:metric}.

\begin{proof}[Proof of Corollary \ref{cor:metric}]
Let $S$ denote the set of all Strebel differentials on $\mathbb{P}^{1}$ with expressions \eqref{QExpression} and $4$ simple zeroes.  For any $q \in S$, by the construction in \cite{Song2017}, we could obtain a cone spherical metric representing the divisor $D = \sum_{j=1}^{4}\frac{1}{2}z_{j}$, i.e., the metric has singular angles $3\pi, 3\pi, 3\pi, 3\pi$ at $z_{1}, z_{2}, z_{3}, z_{4}$. Suppose Strebel differentials $q_{1}, q_{2} \in S$ have the same zero points. Then $q_{1} = q_{2}$ by the expression \eqref{QExpression}. Hence, we always obtain different spherical metrics from distinct Strebel differentials in $S$. By the correspondence \eqref{HCorrespondence}, There exists a bijective correspondence between $S$ and $\{(a,b,c)\in \mathbb{R}_{>0} \mid a+b+c=1\}/{\rm Aut}_{\partial}(\Gamma)$, where $\Gamma$ is the underlying ribbon graph in Figure \ref{fig:Generalcase}. Note that the automorphism group of $\Gamma$ is $\mathbb{Z}/3\mathbb{Z}$(see \cite[Definition 1.8]{Mulase1998}). Hence, we are done.
\end{proof}

\section{Strebel differentials with two double zeroes}
\label{sec:2zeroes}
In this section, we give all the expressions of Strebel differentials whose zero partitions are $4 = 2 + 2$(i.e. $2$ double zeroes) and residue vector $(1,1,1,1)$. From now on, we fix the underlying compact Riemann surface $X = \mathbb{P}^{1}$.

The strategy of our construction is to study the holomorphic maps $f$ from $\mathbb{P}^{1}$ to $\mathbb{P}^{1}$ of degree $4$ such that  $f^*q_0$ has $4$ double poles. Firstly, we prove that the zero partition of $f^*q_0$ can only be $2+2$ or $1+1+1+1$. Secondly, we give the expression of Strebel differential for $\lambda= \frac{1}{2}$ by writing down $f$ with only $2$ critical points. Thirdly, through the research of the branched covering $f$ with $3$ critical points and the critical graph of $f^*q_0$, we work out all the expressions of Strebel differentials if $\lambda \in (\frac{1}{2},1)$. Then by considering M\"{o}bius transformations $x \mapsto 1-x$ and $x \mapsto \frac{1}{x}$, we obtain all the expressions of Strebel differentials for $\lambda \in \mathbb{R} \backslash \{0, 1\}$. At last, we show that these differentials are all the Strebel differentials with residue vector $(1,1,1,1)$ and $2$ double zeroes by investigating the corresponding metric ribbon graphs.

Suppose $q_1$ and $q_2$ have $4$ double poles at the same points with the same residues. Then $q_1-q_2$ has only simple poles. Let $D$ be a divisor of degree $4$ on $\mathbb{P}^{1}$, we know that $\dim_{\mathbb{C}}H^0(K^2(D)) = 1$. Hence the meromorphic quadratic differentials which have $4$ double poles at $(0,1,\lambda,\infty)$ with residue vector $(1,1,1,1)$ have the form of
%\begin{equation*}
%  q = -\frac{dz^2}{4\pi^2} \Big(\frac{a_0^2}{z^2} + \frac{a_1^2}{(z-1)^2} + \frac{a_{\lambda}^2}{(z-\lambda)^2} + \frac{\mu + (a_{\infty}^2-a_0^2-a_1^2-a_{\lambda}^2)z}{z(z-1)(z-\lambda)}\Big)
%\end{equation*}
 %Let $a_0=a_1=a_{\lambda}=a_{\infty}=1$, we have
\begin{footnotesize}
  \begin{align*}
    q &= -\frac{dz^2}{4\pi^2}[\frac{1}{z^2} + \frac{1}{(z-1)^2} + \frac{1}{(z-\lambda)^2} + \frac{\mu-2z}{z(z-1)(z-\lambda)} ] \\
      &= -\frac{dz^2}{4\pi^2} \frac{z^4 + (\mu-2(\lambda+1))z^3+(2(\lambda^2+\lambda+1)-\mu(\lambda+1))z^2 + (\lambda\mu-2\lambda(\lambda+1))z + \lambda^2}{z^2(z-1)^2(z-\lambda)^2},
  \end{align*}
\end{footnotesize}
where $\mu \in \mathbb{C}$ is a parameter.
%\begin{lem}
%  Let $\tau$ be an automorphism of $\overline{\mathbb{C}}$ preserving the set of double poles $\{0,1,\lambda, \infty\}$, then $\tau^*q=q$.
%\end{lem}
%\begin{proof}
%  Denote by $G(\lambda)$ the automorphism group of $\overline{\mathbb{C}}$ that preserving the set $\{0,1,\lambda, \infty\}$. For generic $\lambda$, $G(\lambda)=K_4$. Consider the following two generating elements of $K_4$
%  \begin{itemize}
%    \item[(1)] $0 \leftrightarrow 1, \lambda \leftrightarrow \infty, \tau_1(z)=\frac{\lambda z-\lambda}{z-\lambda}$, $\tau_1^*q = -\frac{dz^2}{4\pi^2}[\frac{1}{(z-\lambda)^2}+\frac{1}{z^2} + \frac{1}{(z-1)^2} + \frac{\mu-2z}{z(z-1)(z-\lambda)} ]$
%    \item[(2)] $0 \leftrightarrow \lambda, 1 \leftrightarrow \infty, \tau_2(z)=\frac{z-\lambda}{z-1}$, $\tau_2^*q = -\frac{dz^2}{4\pi^2}[\frac{1}{(z-\lambda)^2}+ \frac{1}{(z-1)^2} + \frac{1}{z^2} + \frac{\mu-2z}{z(z-1)(z-\lambda)} ]$
%  \end{itemize}
%  Therefore, the form of $q$ is fixed under the action of $K_4$. For the two degenerate cases $\lambda=-1(G(\lambda)=S_3)$ and $\lambda= \frac{1}{2} + \frac{\sqrt{3}}{2}i(G(\lambda)= A_4)$, $q$ is also fixed by the action of $G(\lambda)$ by direct verification.
%\end{proof}
%By Theorem \ref{thm:Strebel}, we know that if we fix the value of $\lambda$, The Strebel differential $q$ with residue vector $(1,1,1,1)$ on $X$ is unique up to isomorphism. Moreover, by the above lemma, $\mu(\lambda)$ is a well defined function. %We want to investigate the properties of $\mu(\lambda)$.

For the zero partition of the quadratic differential $q$ with residue vector $(1,1,1,1)$ on the Riemann sphere, we have the following property:
\begin{lem}
\label{lem:ZeroPartition}
  Let $q$ be a quadratic differential on $\mathbb{P}^{1}$ with $4$ double poles and residue vector $(1,1,1,1)$. Then the zero partition of $q$ is either $4 = 2+2$ or $4=1+1+1+1$, i.e. $q$ has $2$ double zeroes or $4$ simple zeroes.
\end{lem}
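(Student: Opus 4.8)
Write $q=-\dfrac{dz^{2}}{4\pi^{2}}\cdot\dfrac{P(z)}{z^{2}(z-1)^{2}(z-\lambda)^{2}}$ with $P(z)=z^{4}+c_{3}z^{3}+c_{2}z^{2}+c_{1}z+c_{0}$. From \eqref{QExpression} one reads off $c_{0}=\lambda^{2}$, $c_{1}=\lambda c_{3}$, $c_{3}=\mu-2(\lambda+1)$, together with the identity $c_{2}+2\lambda=-(\lambda+1)c_{3}$. A one‑line check gives $P(0)=\lambda^{2}$, $P(1)=(1-\lambda)^{2}$, $P(\lambda)=\lambda^{2}(\lambda-1)^{2}$, all nonzero because $\lambda\in\mathbb{C}\setminus\{0,1\}$; hence the four zeros of $q$, counted with multiplicity, are exactly the four roots of $P$ and none of them is a pole. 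So the claim amounts to: $P$ has no root of multiplicity $3$ or $4$, and it cannot have exactly one root of multiplicity $2$.

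The crucial point is that the relations $c_{0}=\lambda^{2}$ and $c_{1}=\lambda c_{3}$ are precisely what lets us write $P(z)=z^{2}\,Q\!\bigl(z+\tfrac{\lambda}{z}\bigr)$ with $Q(W)=W^{2}+c_{3}W+(c_{2}-2\lambda)$; equivalently, the zero divisor of $q$ is invariant under the involution $\sigma(z)=\lambda/z$, which permutes the four poles ($0\leftrightarrow\infty$, $1\leftrightarrow\lambda$). Letting $w_{+},w_{-}$ be the roots of $Q$, this yields the factorisation into ``$\sigma$‑orbit fibres''
\[ P(z)=(z^{2}-w_{+}z+\lambda)(z^{2}-w_{-}z+\lambda). \]
The two quadratic factors have no common root: subtracting the relations $z_{0}^{2}-w_{\pm}z_{0}+\lambda=0$ forces $(w_{+}-w_{-})z_{0}=0$, so $z_{0}=0$ when $w_{+}\neq w_{-}$, contradicting $\lambda\neq 0$. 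Therefore a zero of $P$ of multiplicity $\geq 2$ can arise only from one of the factors being a perfect square $z^{2}-wz+\lambda=(z-\sqrt\lambda)^{2}$, i.e.\ only if the zero is one of the $\sigma$‑fixed points $\pm\sqrt\lambda$, equivalently only if $\pm 2\sqrt\lambda$ is a root of $Q$.

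It then remains to decide when $Q(\pm 2\sqrt\lambda)=0$. Using $c_{2}+2\lambda=-(\lambda+1)c_{3}$ one obtains the clean identity
\[ Q(\pm 2\sqrt\lambda)=-c_{3}\,(\sqrt\lambda\mp 1)^{2}. \]
If $c_{3}\neq 0$, then since $\lambda\neq 1$ neither $\pm 2\sqrt\lambda$ is a root of $Q$, so neither $\pm\sqrt\lambda$ is a zero of $q$; the four zeros then split into two disjoint $\sigma$‑orbits of size two, giving partition $1+1+1+1$ when $w_{+}\neq w_{-}$ and partition $2+2$ when $w_{+}=w_{-}$ (the factor $z^{2}-w_{+}z+\lambda$ still having distinct roots, as $w_{+}^{2}\neq 4\lambda$). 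If $c_{3}=0$, then $c_{1}=\lambda c_{3}=0$ and $c_{2}=-2\lambda$ (again from $c_{2}+2\lambda=-(\lambda+1)c_{3}$), so $P(z)=z^{4}-2\lambda z^{2}+\lambda^{2}=(z^{2}-\lambda)^{2}$, once more partition $2+2$. In every case the zero partition is $2+2$ or $1+1+1+1$.

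I expect the only real content to be spotting the substitution $w=z+\lambda/z$ — equivalently the hidden involution $\sigma\colon z\mapsto\lambda/z$ forced by the equal residues — and the factored form of $Q(\pm 2\sqrt\lambda)$; after that the argument is elementary, the one thing to watch being the degenerate sub‑cases $c_{3}=0$ and $w_{+}=w_{-}$, which must be handled so that no partition is missed or over‑counted.
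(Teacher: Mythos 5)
Your proof is correct, and it takes a genuinely different route from the paper's. The paper simply computes the discriminant of the quartic numerator, finds that it factors as $\lambda^2(\lambda-1)^2(\mu-2+2\lambda)^2(\mu-2-2\lambda)^2(\mu+2-2\lambda)^2$, and then checks that at each of the three degenerate values of $\mu$ the quartic is the square of a quadratic with distinct roots. You instead observe that the coefficient relations $c_0=\lambda^2$, $c_1=\lambda c_3$ make the zero divisor invariant under the involution $\sigma(z)=\lambda/z$ (the symmetry forced by the equal residues at the pole pairs $\{0,\infty\}$ and $\{1,\lambda\}$), reduce the quartic to a quadratic $Q$ in $w=z+\lambda/z$, and rule out partitions $3+1$, $4$ and $2+1+1$ structurally via the coprime factorization $P=(z^2-w_+z+\lambda)(z^2-w_-z+\lambda)$ together with the identity $Q(\pm 2\sqrt{\lambda})=-c_3(\sqrt{\lambda}\mp 1)^2$. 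Your computations check out (including the degenerate sub-cases $c_3=0$ and $w_+=w_-$, and note that your condition ``$w_+=w_-$ or $c_3=0$'' unwinds to exactly the paper's three values $\mu=2(1\pm\lambda),\,2(\lambda-1)$). What your approach buys is conceptual transparency and no discriminant computation; what the paper's approach buys is the explicit list of degenerate $\mu$'s and the explicit squared quadratics $(z^2-2\lambda z+\lambda)^2$, $(z^2-\lambda)^2$, $(z^2-2z+\lambda)^2$, which are used directly in the statement and proof of Theorem \ref{thm:DoubleZeroes}. One small presentational point: your sentence ``a zero of $P$ of multiplicity $\geq 2$ can arise only from one of the factors being a perfect square'' holds only under the standing assumption $w_+\neq w_-$ of the preceding coprimality argument; your subsequent case analysis does treat $w_+=w_-$ correctly, but you should flag that hypothesis explicitly in that sentence.
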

\begin{proof}

In order to investigate the zero partition of $q$, we only need consider the numerator of the expression of $q$. The discriminant of the polynomial
\begin{footnotesize}
\begin{align}
\label{discriminant}
z^4 + (\mu-2(\lambda+1))z^3+(2(\lambda^2+\lambda+1)-\mu(\lambda+1))z^2 + (\lambda\mu-2\lambda(\lambda+1))z + \lambda^2
\end{align}
\end{footnotesize}
is
$$\lambda^2 (\lambda-1)^2 (\mu -2+2\lambda)^2 (\mu -2-2\lambda)^2 (\mu+2-2\lambda)^2.$$
Hence, $q$ has a multiple zero if and only if $\mu = 2(1-\lambda)$, $2(1+\lambda)$ or $2(\lambda-1)$. For these three cases,  the corresponding expressions of (\ref{discriminant}) are $(z^2- 2\lambda z + \lambda)^2$, $(z^2-\lambda)^2$ or $(z^2-2z+\lambda)^2$ respectively. As a result, $q$ has either $4$ simple zeroes or $2$ double zeroes.
\end{proof}
Therefore, if $q$ is a Strebel differential on the Riemann sphere with $4$ double poles and residue vector $(1,1,1,1)$, then the zero partition of $q$ can only be $4=2+2$ or $4=1+1+1+1$. For these two partitions, we can determine their ribbon graphs
\begin{lem}
\label{lem:PossibleGraphs}
   Suppose $q$ is a Strebel differential with residue vector $(1,1,1,1)$. If the zero partition of $q$ is $4=2+2$, then its ribbon graph looks like
  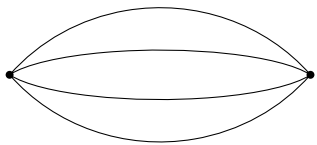
\begin{figure}[H]
  \begin{center}
    \begin{tikzpicture}
      \draw (2,0) arc (20:160:2.13cm and 0.5cm)
            (-2,0) arc (200:340:2.13cm and 0.5cm)
            (2,0) arc (40:140:2.61cm and 2.5cm)
            (-2,0) arc (220:320:2.61cm and 2.5cm);
      \fill (-2,0) circle (1.5pt)
           (2,0)  circle (1.5pt);
  \end{tikzpicture}
  \end{center}
\caption{Ribbon graph for the partition $4=2+2$.}
\label{fig:2DoubleZeroes}
\end{figure}
If the zero partition of $q$ is $4=1+1+1+1$, its ribbon graph is
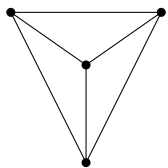
\begin{figure}[H]
\begin{center}
    \begin{tikzpicture}
      \draw (0,0.3)--(1,1)
            (0,0.3)--(0,-1)
            (0,0.3)--(-1,1)
            (1,1)--(0,-1)
            (0,-1)--(-1,1)
            (-1,1)--(1,1);
       \filldraw (0,0.3) circle (1.5pt)
                 (0,-1) circle (1.5pt)
                 (1,1) circle (1.5pt)
                 (-1,1) circle (1.5pt);
    \end{tikzpicture}
  \end{center}
\caption{Ribbon graph for the partition $4=1+1+1+1$.}
\label{fig:4SimpleZeroes}
\end{figure}
\end{lem}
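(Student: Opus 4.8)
\emph{Proof proposal.} The plan is to pass from $q$ to the combinatorics of its metric ribbon graph $\Gamma$ and then run a short finite case check. Recall that $\Gamma$ is a connected graph cellularly embedded in $\mathbb{P}^1=S^2$, whose vertices are the zeros of $q$ with a zero of multiplicity $m$ giving a vertex of degree $m+2$, and whose complement is a disjoint union of $F=4$ open discs, one around each double pole. The one piece of metric information I will use is that, since all four residues equal $1$, all four faces have the same $q$-metric perimeter, and every edge has strictly positive length. By Lemma \ref{lem:ZeroPartition} the zero partition is $2+2$ or $1+1+1+1$, so the handshake lemma together with Euler's formula $V-E+F=2$ forces $(V,E,F)=(2,4,4)$ with two vertices of degree $4$ in the first case, and $(V,E,F)=(4,6,4)$ with four vertices of degree $3$ in the second.

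For the partition $2+2$ I would enumerate the connected multigraphs on two vertices $u,v$ both of degree $4$. If $m$ is the number of $u$--$v$ edges and $p,r$ the numbers of loops at $u,v$, then $m+2p=m+2r=4$, so $p=r$ and $(m,p,r)\in\{(4,0,0),(2,1,1)\}$, the value $(0,2,2)$ being disconnected. The $(2,1,1)$ graph is killed by the perimeter hypothesis: a loop $\ell$ at $u$ bounds a monogon face of perimeter $\mathrm{len}(\ell)$, while the face along the other side of $\ell$ also contains the two $u$--$v$ edges and so has perimeter at least $\mathrm{len}(\ell)+\mathrm{len}(e_1)+\mathrm{len}(e_2)>\mathrm{len}(\ell)$, contradicting equality of the four perimeters. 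Hence the underlying multigraph is the four-edge banana graph $B_4$, which carries a unique genus-zero ribbon structure (the rotation at $v$ is the reverse of the one at $u$), whose four faces are the bigons between cyclically consecutive edges; this is Figure \ref{fig:2DoubleZeroes}, and Theorem \ref{thm:DoubleZeroes} exhibits positive edge lengths realizing it.

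For the partition $1+1+1+1$, $\Gamma$ is a connected trivalent multigraph on four vertices with six edges. If $\Gamma$ has a loop, the same monogon argument applies, so $\Gamma$ is loopless; a direct enumeration (sorting by the number of double edges) shows that the only loopless connected trivalent multigraphs on four vertices are $K_4$ and the graph $G_0$ obtained from a $4$-cycle by doubling a pair of opposite edges. For $G_0$ I would fix its genus-zero $2$-cell embedding and write out the four equations ``perimeter of each face $=1$'' in the six length variables; a short manipulation of these four equations forces the sum of the two single-edge lengths to vanish, so $G_0$ cannot occur. Therefore $\Gamma=K_4$, with its unique $2$-cell embedding in $S^2$ and its four triangular faces, which is Figure \ref{fig:4SimpleZeroes}; Theorem \ref{thm:GeneralCase} again supplies realizing positive lengths.

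The main obstacle is the bookkeeping in the last case: assembling the (short) list of candidate trivalent multigraphs on four vertices, checking which of them admit a genus-zero all-disc embedding with exactly four faces, and for each non-$K_4$ survivor solving the linear perimeter system to certify infeasibility. None of this is conceptually deep, but some care is needed to be exhaustive, since --- beyond Euler's formula --- the only input that rules out the bad graphs is the equality of the four face-perimeters forced by the residue vector $(1,1,1,1)$.
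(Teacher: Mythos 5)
Your proof is correct and follows essentially the same route as the paper: the residue vector $(1,1,1,1)$ forces all four faces of the ribbon graph to have perimeter $1$, loops are excluded because a loop bounds a monogon face whose perimeter is strictly smaller than that of the face on its other side, and the few remaining candidate graphs are enumerated with the non-$K_4$ trivalent candidate eliminated by the perimeter constraints. If anything, your argument is more complete than the paper's, which merely asserts without justification that the doubled-$4$-cycle graph (Figure \ref{fig:fake}) ``can not be a metric ribbon graph with residue $(1,1,1,1)$,'' whereas you derive the explicit contradiction that the two undoubled edge lengths must sum to zero.
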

\begin{proof}
   For the first case, let $\Gamma$ be the metric ribbon graph of $q$. Suppose there  exists a loop $l$ in $\Gamma$.  Then the Riemann sphere is divided into $2$ regions by $l$. Let $D$ be the one of the $2$ regions containing no vertex of $\Gamma$. Since the total length of the boundary of each component of $\Gamma$ is $1$, we conclude that there is no other loop in the interior of $D$, which means that $D$ is a component of $\Gamma$ and the length of $l$ is $1$. Then the length of boundaries of the other component besides $D$ touched by $l$ is greater than $1$, contradiction! Hence, $\Gamma$ is a planar graph with $2$ vertices and no loop. The only possible graph is Figure \ref{fig:2DoubleZeroes} since the degree of each vertex is $4$.

    For the second case, we also show that there is no loop in $\Gamma$. Otherwise,  note that we can assume $D$ contains at most one vertex of $\Gamma$. If there is no vertex in $D$, the argument is the same as the first case. Suppose there is a vertex $v$ in $D$. Then there is a small loop in $D$ incident to $v$ with the length of $1$, contradiction! Then $\Gamma$ can only be Figure \ref{fig:4SimpleZeroes} and the following one
\begin{figure}[H]
\begin{center}
\begin{tikzpicture}%[rotate=90]
  \draw (1,0) arc (20:160:1.06cm and 0.25cm)
            (-1,0) arc (200:340:1.06cm and 0.25cm)
            (1,1) arc (20:160:1.06cm and 0.25cm)
            (-1,1) arc (200:340:1.06cm and 0.25cm)
            (-1,1)--(-1,0)
            (1,1)--(1,0);
\end{tikzpicture}
\end{center}
\caption{A fake ribbon graph.}
\label{fig:fake}
\end{figure}
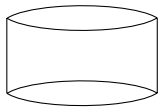
However, for Figure \ref{fig:fake}, it can not be a metric ribbon graph with residue $(1,1,1,1)$.
\end{proof}

Now, we give a simple meromorphic quadratic differential which plays the role of a building block in the following construction (see \cite{Mulase1998} Example 4.4).
\begin{ex}
  \label{ex:block}
  Consider the meromorphic quadratic differential on $\mathbb{P}^{1}$
  \begin{equation*}
    q_0'=\frac{1}{4\pi^2}\frac{dz^2}{z(1-z)}.
  \end{equation*}
  It has simple poles at $0$ and $1$, and a double pole at $\infty$. By solving differential equations, we know that the line segment $[0,1]$ is a horizontal trajectory of length $1/2$. The whole $\mathbb{P}^{1}$ minus $[0,1]$ and $\infty$ is covered by a collection of compact horizontal trajectories which are confocal ellipses
  \begin{equation*}
    z=a \cos \theta +1/2 + i b \sin\theta,
  \end{equation*}
  where $a$ and $b$ are positive constants that satisfy $a^2 = b^2 + 1/4$. The length of each closed curve is $1$.
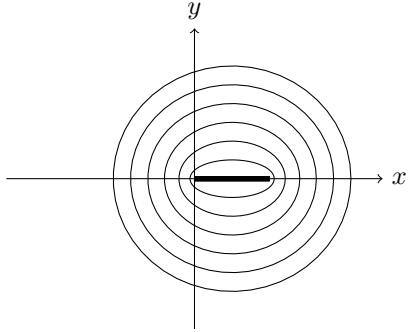
\begin{figure}[H]
\begin{center}
  \begin{tikzpicture}[domain=0:2*pi]
    \draw[->] (-2.5,0) -- (2.5,0) node[right] {$x$};
    \draw[->] (0,-2) -- (0,2) node[above] {$y$};
    \draw[line width=2pt] plot ({0.5*cos(\x r)+0.5},0);
    \draw[samples=50] plot ({0.56*cos(\x r)+0.5},{0.25*sin(\x r)});
    \draw[samples=50] plot ({0.707*cos(\x r)+0.5},{0.5*sin(\x r)});
    \draw[samples=50] plot ({0.9*cos(\x r)+0.5},{0.75*sin(\x r)});
    \draw[samples=50] plot ({1.12*cos(\x r)+0.5},{1.0*sin(\x r)});
    \draw[samples=50] plot ({1.35*cos(\x r)+0.5},{1.25*sin(\x r)});
    \draw[samples=50] plot ({1.58*cos(\x r)+0.5},{1.5*sin(\x r)});
  \end{tikzpicture}
\end{center}
\caption{Horizontal trajectories of $q_0'$.}
\label{fig:BuildingBlock}
\end{figure}

\end{ex}

Let $\phi (z)=\frac{z}{z-1}$ and $q_0=\phi ^*(q_0') = -\frac{1}{4 \pi^2} \frac{dz^2}{z(z-1)^2}$. Then $q_0$ has simple poles at $0, \infty$, and a double pole at $1$ with residue $1$. For any compact Riemann surface $X$ and a holomorphic map $f: X \rightarrow \mathbb{P}^{1}$, $f^*q_0$ has only finite critical trajectories and no recurrent trajectories since $f$ is proper.

By Theorem \ref{thm:Strebel}, we know that, for any given $\lambda \in \mathbb{C}\backslash \{0,1\}$, the value of $\mu$ in (\ref{QExpression}) is unique if $q$ is Strebel. In order to get the value $\mu(\lambda)$ for some $\lambda$, let us consider $f: \mathbb{P}^{1} \rightarrow \mathbb{P}^{1}$ be a branched covering such that $f^*(q_0)$ has $4$ double poles with residue vector $(1,1,1,1)$ and no simple poles on $\mathbb{P}^{1}$, then
\begin{itemize}
  \item $\deg f=4$;
  \item $1$ is not the critical value of $f$;
  \item the local ramification degrees $>1$ over $0$ and $\infty$.
\end{itemize}
By Riemann-Hurwitz formula, the total branch order $\nu (f)=6$. We consider the following two cases:

\begin{ca}[{\bf The expression of the Strebel differential for $\lambda = \frac{1}{2}$}]
\label{ca:1}
\quad \\
If the local ramification degrees over $0$ and $\infty$ are both $4$, we can assume that $f(0)=0, f(\infty)=\infty$ and $f$ has the form of $cx^4$. For distinct $c$, the sets $\{0,\infty\} \cup \{$roots of $f(x)=0\}$ are all equivalent under M\"{o}bius transformation. Hence, we only need to consider $f(x)=x^4$, then
  \begin{align*}
    f^*(q_0)=-\frac{dx^2}{4\pi^2}\frac{16x^2}{(x^2+1)^2 (x^2-1)^2},
  \end{align*}
  and its critical graph is
 \begin{figure}[H]
  \begin{center}
    \begin{tikzpicture}
      \draw (2,0) arc (20:160:2.13cm and 0.5cm)
            (-2,0) arc (200:340:2.13cm and 0.5cm)
            (2,0) arc (40:140:2.61cm and 2.5cm)
            (-2,0) arc (220:320:2.61cm and 2.5cm);
      \node at (2.2,0) {\tiny{$\infty$}};
      \node at (-2.1,0) {\tiny{$0$}};
      \node at (0,0.5) {\tiny{$\frac{1}{2}$}};
      \node at (0,-0.15) {\tiny{$\frac{1}{2}$}};
      \node at (0,1.05) {\tiny{$\frac{1}{2}$}};
      \node at (0,-0.75) {\tiny{$\frac{1}{2}$}};
    \end{tikzpicture}
  \end{center}
 \caption{Metric critical graph of $f^*q_0$.}
 \label{fig:GraphDegree4}
\end{figure}
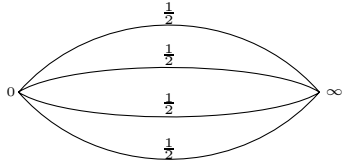
  Since $f^*(q_0)$ has only $4$ critical trajectories and its critical graph is connected, it is a Strebel differential on $\mathbb{P}^{1}$. Consider M\"{o}bius transformation $\varphi (x)=\frac{1-(1-i)x}{-1+(1+i)x}$, then
  \begin{align*}
    \varphi^*f^*q_0=-\frac{dx^2}{4\pi^2}\frac{x^4 - 2x^3 + 2x^2 - x +1/4}{x^2 (x-1)^2 (x-1/2)^2},
  \end{align*}
  which is a Strebel differential with four double poles at $(0,1,1/2,\infty)$ and residue vector $(1,1,1,1)$.
\end{ca}

\begin{ca}[{\bf The expressions of Strebel differentials for $\lambda \in (\frac{1}{2}, 1)$}]
\label{ca:2}
\quad \\
If the local ramification degrees over $0$ and $\infty$ are $(2,2)$ and $4$ respectively, we can assume $f(x)=\frac{1}{c^2}x^2 (x-1)^2$ with $c \in \mathbb{C}^*$. The zeroes of $f^*q_0$ have the type of multiplicities $(2,2)$ and the double poles are located at
\begin{align*}
  \frac{1+\sqrt{1+4c}}{2}, \frac{1-\sqrt{1+4c}}{2}, \frac{1+\sqrt{1-4c}}{2}, \frac{1-\sqrt{1-4c}}{2}.
\end{align*}
Taking a M\"{o}bius transformation
\begin{align*}
  \varphi (z) = \frac{z-\frac{1+\sqrt{1+4c}}{2}}{z-\frac{1+\sqrt{1-4c}}{2}} \cdot \frac{\frac{1-\sqrt{1+4c}}{2}-\frac{1+\sqrt{1-4c}}{2}}{\frac{1-\sqrt{1+4c}}{2}-\frac{1+\sqrt{1+4c}}{2}},
\end{align*}
we have
\begin{align*}
  \varphi \left( \frac{1- \sqrt{1 - 4c}}{2} \right) = \frac{1+\sqrt{1 - 16c^2}}{2\sqrt{1 - 16c^2}},
\end{align*}
i.e. the M\"{o}bius transformation $\varphi$ sends the location of four double poles to $(0,1,\infty, \lambda)$.
\begin{align*}
  \lambda = \frac{1+ \sqrt{1- 16c^2}}{2\sqrt{1- 16c^2}}, \\
  %\lambda-1/2= \frac{1}{2\sqrt{1-16c^2}}\\
  c^2 = \frac{\lambda (\lambda-1)}{4 (2\lambda - 1)^2}.
\end{align*}
A routine computation gives rise to $f'(x)=\frac{2}{c^2}x(x-1)(2x-1)$. Thus the ramification points of $f$ are $0,1,\frac{1}{2},\infty$ and
\begin{align*}
  f(0)=f(1)=0, \\
  f(\infty)=\infty, \\
  f(\frac{1}{2})=\frac{1}{16c^2}.
\end{align*}
Hence, for any $c^2 \neq \frac{1}{16}$, $f^*q_0$ has $4$ double poles with residue vector $(1,1,1,1)$ and $2$ double zeroes at $\frac{1}{2}, \infty$. %If the value $f(\frac{1}{2}) = \frac{1}{16c^2} \not \in (-\infty,0)$ which means $f(\frac{1}{2})$ locates on some compact horizontal trajectory of $q_0$. The horizontal trajectories of $f^*q_0$ look like
%\begin{figure}[H]
%\begin{center}
%  \begin{tikzpicture}
%    \draw (0,1) ellipse (0.3 and 1)
%          (0,-1) ellipse (0.3 and 1)
%          (3,1) ellipse (0.3 and 1)
%          (3,-1) ellipse (0.3 and 1);
%    \draw[dashed] (0,1) ellipse (0.15 and 0.5)
%                   (0,-1) ellipse (0.15 and 0.5)
%                   (0,1) ellipse (0.08 and 0.25)
%                   (0,-1) ellipse (0.08 and 0.25)
%                   (3,1) ellipse (0.15 and 0.5)
%                   (3,-1) ellipse (0.15 and 0.5);
%    \draw[dashed] (0.35,0) arc (-45:225:0.5cm and 1.5cm)
%                  (-0.35,0) arc (135:405:0.5cm and 1.5cm)
%                  (3.35,0) arc (-45:225:0.5cm and 1.5cm)
%                  (2.65,0) arc (135:405:0.5cm and 1.5cm);
%    \fill[white] (0,0) circle (1.5pt);
%    \draw (0,0) circle (1.5pt) node[right] {\tiny{$\infty$}};
%    \fill[gray] (0,0.5) circle (2pt)
%               (0,-0.5) circle (2pt)
%               (3,0) circle (2pt) node[left] {\tiny{$\frac{1}{2}$}};
%    \filldraw (0,2) circle (2pt) node[above] {\tiny{$0$}}
%              (0,-2) circle (2pt) node[below] {\tiny{$1$}};
%    \node at (1.5,0) {$\cdots$};
%  \end{tikzpicture}
%\end{center}
%\caption{Horizontal trajectories of non-connected critical graphs}
%\label{fig:NonconGraph}
%\end{figure}
If $f(\frac{1}{2}) = \frac{1}{16c^2} \in (-\infty,0)$, the horizontal trajectories of $f^*q_0$ are as follows
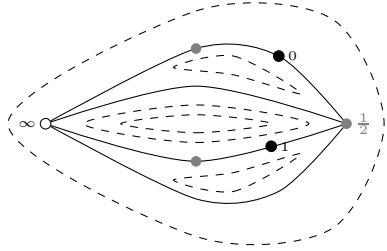
\begin{figure}[H]
\begin{center}
  \begin{tikzpicture}
    \draw plot[smooth] coordinates {(-2,0) (0,0.5) (2,0)};
    \draw plot[smooth] coordinates {(-2,0) (0,-0.5) (2,0)};
    \draw[dashed] plot[smooth cycle] coordinates {(-1.5,0) (0,-0.25) (1.5,0) (0,0.25)};
    \draw[dashed] plot[smooth cycle] coordinates {(-1,0) (0,-0.12) (1,0) (0,0.12)};
    \draw plot[smooth] coordinates {(-2,0) (0,1) (1.1,0.9) (2,0)};
    \draw plot[smooth] coordinates {(-2,0) (0,-1) (1.1,-0.9) (2,0)};
    \draw[dashed] plot[smooth cycle] coordinates {(-0.3,0.75) (0.5,0.9) (1.4,0.4) (0.5,0.65)};
    \draw[dashed] plot[smooth cycle] coordinates {(-0.3,-0.75) (0.5,-0.9) (1.4,-0.4) (0.5,-0.65)};
    \draw[dashed] plot[smooth cycle] coordinates {(-2.5,0) (0,1.5) (1.8,1.4) (2.5,0) (1.8,-1.4) (0,-1.5)};
    \filldraw[white] (-2,0) circle (2pt);
    \draw (-2,0) circle (2pt) node[left] {\tiny{$\infty$}};
    \fill[gray] (0,1) circle (2pt)
                (0,-0.5) circle (2pt)
                (2,0) circle (2pt) node[right] {\tiny{$\frac{1}{2}$}};
    \filldraw (1.1,0.9) circle (2pt) node[right] {\tiny{$0$}}
              (1,-0.3) circle (2pt) node[right] {\tiny{$1$}};
  \end{tikzpicture}
\end{center}
\caption{\footnotesize{Horizontal trajectories of Strebel differential with type of zeroes $(2,2)$.}}
\label{fig:DoubleZeroes}
\end{figure}
Therefore, $f^*q_0$ is Strebel differential when $c^2 < 0$. In what follows, we consider $c^2 < 0$, i.e. $\lambda \in (\frac{1}{2},1)$, then
\begin{align*}
  f^*q_0 = -\frac{dx^2}{4\pi^2} \frac{4c^2(2x-1)^2}{(x^2-x-c)^2 (x^2-x+c)^2}.
\end{align*}
The inverse transformation of $\varphi(z)$ is
\begin{align*}
  2z &= \frac{2(1+\sqrt{1-4c})x - \Big(1+\sqrt{1+4c}+\sqrt{1-4c}+\sqrt{\frac{1-4c}{1+4c}}\Big)}{2x - (1+\sqrt{\frac{1-4c}{1+4c}})}\\
     &= (1+\sqrt{1-4c}) - \frac{\frac{8c}{\sqrt{1+4c}}}{2x - \Big(1+\sqrt{\frac{1-4c}{1+4c}}\Big)}.
\end{align*}
By a direct calculation, we have
\begin{align*}
  (\varphi^{-1})^*f^*q_0 = -\frac{dx^2}{4\pi^2} \frac{\Big(x-\frac{1}{2}(1+\sqrt{\frac{1-4c}{1+4c}})\Big)^2 \Big(x-\frac{1}{2}(1+\sqrt{\frac{1+4c}{1-4c}})\Big)^2}{x^2 (x-1)^2 \Big(x-\frac{1+\sqrt{1-16c^2}}{2\sqrt{1-16c^2}}\Big)^2}.
\end{align*}
Therefore
\begin{align*}
  \lambda = \frac{1+\sqrt{1-16c^2}}{2\sqrt{1-16c^2}}, \\
  \mu (\lambda) = 2-2\lambda.
\end{align*}
\end{ca}

\begin{proof}[Proof of Theorem \ref{thm:DoubleZeroes}]
By Case \ref{ca:1} and \ref{ca:2}, we know the expressions of Strebel differentials when $\lambda \in [\frac{1}{2},1)$. In order to obtain all the expressions of Strebel differentials for $\mathbb{R} \backslash \{0, 1\}$, we consider M\"{o}bius transformation $x \mapsto 1-x$, then $q$ becomes to
\begin{align*}
  -\frac{dx^2}{4\pi^2}\Big(\frac{1}{x^2} + \frac{1}{(x-1)^2} + \frac{1}{(x-(1-\lambda))^2} + \frac{2-\mu(\lambda) - 2x} {x (x-1) (x-(1-\lambda))}\Big).
\end{align*}
Hence $\mu(1-\lambda) = 2-\mu(\lambda)$. By considering $x \mapsto 1/x$, we get $\mu(1/\lambda) = \mu(\lambda)/\lambda$. To sum up all results, $q$ is Strebel differential if $\mu$ and $\lambda$ satisfy
\begin{figure}[H]
\begin{center}
  \begin{tikzpicture}[>=stealth]
    \draw[->] (-3.5,0) -- (3.5,0) node[right] {$\lambda$};
    \draw[->] (0,-3) -- (0,3) node[above] {$\mu$};
    \foreach \x in {-3,-2,-1,1,2,3} {\draw (\x,-0.05) -- (\x,0.05) node[below] {\tiny{\x}};}  
    \foreach \y in {-2,-1,1,2} {\draw (-0.05,\y) -- (0.05,\y) node[left] {\tiny{\y}};}  
    \node at (0.1,-0.15) {\tiny{$0$}};
    \draw (2.3,2.6) -- (1,0) -- (0,2) -- (-2.2,-2.4);
    \draw[dashed] (0,2) -- (2,2) -- (2,0);
    \fill[white] (0,2) circle (1pt)
                 (1,0) circle (1pt);
    \draw (0,2) circle (1pt)
          (1,0) circle (1pt);
    \node at (2.6,2.2) {\tiny{$2\lambda -2$}};
    \node at (-1,1.1) {\tiny{$2\lambda +2$}};
  \end{tikzpicture}
\end{center}
\caption{The relation between $\mu$ and $\lambda$ when $q$ is Strebel.}
\end{figure}
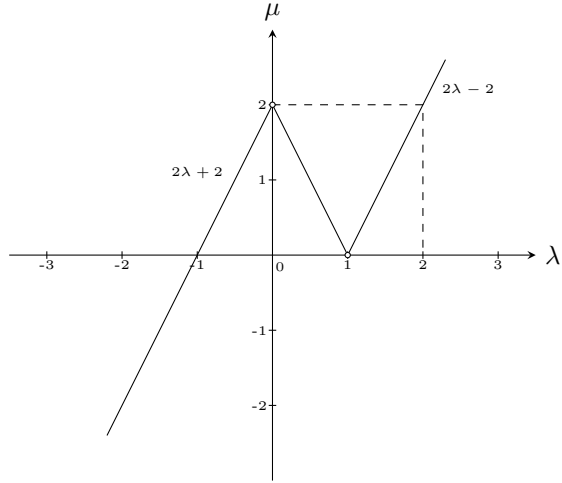
where $q = -\frac{dz^2}{4\pi^2}\Big(\frac{1}{z^2} + \frac{1}{(z-1)^2} + \frac{1}{(z-\lambda)^2} + \frac{\mu-2z}{z(z-1)(z-\lambda)}\Big)$.

Now consider $\lambda \in [\frac{1}{2},1)$, then $\mu(\lambda) = 2-2 \lambda$ and the length of the arc from $0$ (or $1$) to $\frac{1}{2}$ in Figure \ref{fig:DoubleZeroes} is
\begin{align*}
\Big|\int_{f(0)}^{f(\frac{1}{2})} \sqrt{q_0}\Big| &= \Big|\int_0^{\frac{1}{16c^2}} \sqrt{q_0}\Big| \\
                                                                     &=  \Big|\int_0^{\frac{1}{1-16c^2}} \sqrt{q'_0}\Big| \\
                                                                     &= \frac{1}{2\pi} \int_0^{(2\lambda-1)^2} \frac{dz}{\sqrt{z(1-z)}}\\
                                                                     &=  \frac{1}{\pi} \int_0^{\arcsin(2\lambda-1)} d\theta \quad ( z=\sin ^2 \theta) \\
                                                                     &= \frac{\arcsin(2\lambda-1)}{\pi},
\end{align*}
which means that we obtain all the metric ribbon graphs as Figure \ref{fig:DoubleZeroes}. Since the metric ribbon graphs of Strebel differentials with $2$ double zeroes and residue vector $(1,1,1,1)$ are exhausted by Figure \ref{fig:DoubleZeroes}, we complete our proof by Lemma \ref{lem:PossibleGraphs} and Harer's correspondence.
\end{proof}

\section{Strebel differentials with four simple zeroes}
\label{sec:4zeroes}
Let $q$ be a Strebel differential with residue vector $(1,1,1,1)$ and $4$ simple zeroes on the Riemann sphere, then its ribbon graph $\Gamma$ has $4$ components and $4$ vertices of degrees $3$. The graph $\Gamma$ can only be Figure \ref{fig:4SimpleZeroes} by Lemma \ref{lem:PossibleGraphs}.

 In the following of this section, we give the proof of the remaining part of Theorem \ref{thm:GeneralCase}. In Proposition \ref{prop:BelyiFactor}, we prove that $f$ factors through another Belyi map with degree $\frac{1}{2}\deg f$. And then show that $f$ has minimal degree in Proposition \ref{prop:MinimalDegree}. We also give $5$ explicit examples by our construction at the end of this section.
\begin{prop}
\label{prop:BelyiFactor}
  Let $f(x): \mathbb{P}^{1} \rightarrow \mathbb{P}^{1}$ be a Belyi map satisfies
  \begin{itemize}
    \item $f^*q_0$ is Strebel differential;
    \item $f^*q_0$ has 4 simple zeroes;
    \item $f^*q_0$ has 4 double poles with the same residues.
  \end{itemize}
  Then there exists a Belyi map $g(x)$ so that $f = g \circ x^2$.
\end{prop}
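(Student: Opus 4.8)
\emph{Proof strategy.} The plan is to show that the three hypotheses force $f$ to admit a non-trivial automorphism of order two, and then to convert this into the factorization. Observe first that $f=g\circ x^{2}$ for some Belyi map $g$ if and only if there is a non-trivial $\sigma\in\operatorname{Aut}(\mathbb{P}^{1})$ with $\sigma^{2}=\operatorname{id}$ and $f\circ\sigma=f$: an order-two element of $\operatorname{Aut}(\mathbb{P}^{1})$ has exactly two fixed points, so in a suitable coordinate $x$ it is $x\mapsto -x$; then $f\circ\sigma=f$ says that $f$ is an even rational function, i.e.\ $f(x)=g(x^{2})$ for a unique rational $g$, and $g$ is Belyi because every critical value of $g$ is either one of $g(0),g(\infty)$ (the $g$-images of the branch points of $x\mapsto x^{2}$) or a critical value of $f$, all of which lie in $\{0,1,\infty\}$; conversely $x\mapsto -x$ works for $f=g\circ x^{2}$. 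So it suffices to produce such a $\sigma$.

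First I would determine the ramification of $f$. Since $q_{0}$ has a double pole at $1$, simple poles at $0$ and $\infty$, and no zeroes, the local form of $f^{*}q_{0}$ gives: a preimage of $1$ with local degree $k$ is a double pole of residue $k$; a preimage of $0$ or of $\infty$ must have local degree $\ge 2$ (local degree $1$ would produce a simple pole, impossible for a Strebel differential) and, with local degree $k$, is a regular point if $k=2$ and a zero of order $k-2$ if $k\ge 3$; while a ramification point over a fourth value would produce a zero of even order $\ge 2$. As $f^{*}q_{0}$ has only simple zeroes, $f$ is unramified off $\{0,1,\infty\}$, over $0$ and $\infty$ every local degree is $2$ or $3$, and exactly four points (necessarily the four zeroes of $f^{*}q_{0}$) have local degree $3$. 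As the four double poles have a common residue and they are exactly the points of $f^{-1}(1)$, this fibre consists of four points of a common local degree $r:=\tfrac14\deg f$. Finally the identity $\sum_{p\in f^{-1}(0)}(\text{local degree of }p)=\deg f=4r$ forces the number $b_{3}$ of local-degree-$3$ points over $0$ to be even, and likewise the number $c_{3}$ over $\infty$; together with $b_{3}+c_{3}=4$ this leaves only $(b_{3},c_{3})\in\{(4,0),(2,2),(0,4)\}$.

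The crux is to recover $f$ from finite combinatorial data. Let $e_{0}$ denote the unique critical trajectory of $q_{0}$ — the arc $(-\infty,0]$ of $q_{0}$-length $\tfrac12$ joining the simple poles $0$ and $\infty$, with $\mathbb{P}^{1}\setminus e_{0}$ a disc around the double pole $1$. Using that $f^{*}q_{0}$ has no recurrent and no spurious closed trajectories, one checks that its critical graph $\Gamma$ is precisely $f^{-1}(\overline{e_{0}})$: the vertices are the four simple zeroes, and each edge is a concatenation of arcs of length $\tfrac12$ mapping homeomorphically onto $e_{0}$. By the analysis already carried out in the paper (Lemma \ref{lem:PossibleGraphs} and Theorem \ref{thm:GeneralCase}, Figure \ref{fig:Generalcase}), after rescaling to residues $(1,1,1,1)$ the metric ribbon graph $\Gamma$ is $K_{4}$ with the three perfect matchings carrying edge-lengths $a,a$, $b,b$, $c,c$. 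Now $\phi\circ f$ is again a Belyi map (recall $\phi(z)=z/(z-1)$, $q_{0}=\phi^{*}q_{0}'$, and $\phi$ carries $e_{0}$ onto $[0,1]$), and its dessin d'enfant is $(\phi f)^{-1}([0,1])=f^{-1}(\overline{e_{0}})$, i.e.\ $\Gamma$ with every edge subdivided into its length-$\tfrac12$ pieces; since this subdivided graph is bipartite with parts $f^{-1}(0)$ and $f^{-1}(\infty)$, the colours ``over $0$''/``over $\infty$'' alternate along every edge. Hence $\Gamma$ together with the $2$-colouring $\chi$ of its four vertices (recording which lie over $0$ and which over $\infty$) determines the dessin, hence $\phi\circ f$, hence $f$, up to isomorphism, and any colour-preserving automorphism of $(\Gamma,\chi)$ lifts, uniquely, to an automorphism of $f$.

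It remains to exhibit a non-trivial colour-preserving automorphism of $(\Gamma,\chi)$ of order two. Write the vertices of $K_{4}$ as $P_{0},P_{1},P_{2},P_{3}$; each of the three double transpositions of $\{P_{0},P_{1},P_{2},P_{3}\}$ is an automorphism of $K_{4}$ as a ribbon graph and preserves all three perfect matchings, hence all edge-lengths, hence is an automorphism of the metric ribbon graph $\Gamma$. If $(b_{3},c_{3})$ is $(4,0)$ or $(0,4)$, then $\chi$ is constant and any of these three involutions preserves it. If $(b_{3},c_{3})=(2,2)$, let $\{P_{i},P_{j}\}$ be the two vertices over $0$ and $\{P_{k},P_{l}\}$ the two over $\infty$; the edges $P_{i}P_{j}$ and $P_{k}P_{l}$ form one matching, and $(P_{i}P_{j})(P_{k}P_{l})$ permutes $\{P_{i},P_{j}\}$ and $\{P_{k},P_{l}\}$ internally, so it preserves $\chi$ — this is exactly where the parity restriction $(b_{3},c_{3})\ne(1,3),(3,1)$ from the second step is used. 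In every case we obtain a non-trivial involution $\alpha$ of $(\Gamma,\chi)$; it lifts to an automorphism $\sigma$ of $f$, and since the lift is unique (a Möbius transformation is determined by three points of $f^{-1}(\overline{e_{0}})$), the assignment $\alpha\mapsto\sigma$ is a homomorphism, so $\sigma$ is a non-trivial involution with $f\circ\sigma=f$, as needed. I expect the main difficulty to be the third step — making precise the reconstruction of $\phi\circ f$ from $(\Gamma,\chi)$, namely that the subdivided critical graph really is its dessin with the asserted colour alternation, and that ribbon-graph automorphisms lift to deck transformations — while the ramification bookkeeping and the concluding combinatorial check are routine.
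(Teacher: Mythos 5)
Your proof is correct and follows essentially the same route as the paper: both identify the dessin of $f$ with the subdivided $K_4$ critical graph (opposite edges of equal length, vertex colours determined by the fibres over $0$ and $\infty$) and extract the order-two symmetry that exhibits $f$ as a pullback through $x\mapsto x^2$. Your write-up is in fact more complete than the paper's, which only sketches the branch-data case $\bigl((3^2,2^{2d-3}),(3^2,2^{2d-3}),d^4\bigr)$ by referring to an example; your parity argument for $(b_3,c_3)$ and the explicit lifting of the colour-preserving involution to a M\"obius deck transformation supply exactly what the paper leaves implicit.
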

\begin{proof}
  By the conditions of $f^*q_0$,  we know that the skeleton of ribbon graph of $f^*q_0$ is the Figure \ref{fig:4SimpleZeroes}. Since the residues of $f^*q_0$ are equal to each other, the local ramification degrees over $1$ of $f$ are the same to each other. As a result, $\deg f= 4d$ for some positive integer $d \geq 2$. The branch data over $0, \infty, 1$ of $f$ has only two possible cases for $d \geq 3$
  \begin{itemize}
    \item[$\blacklozenge$]  ($3^4,2^{2d-6}$), $2^{2d}$, $d^4$;
    \item[$\blacklozenge$]  ($3^2,2^{2d-3}$), ($3^2,2^{2d-3}$), $d^4$.
  \end{itemize}
  For the first case, the vertices of the dessin(inverse image of segment $[-\infty, 0]$) of $f$ have the same colour. We can draw the dessin as follows if the points on edges are omitted.
  \begin{center}
    \begin{tikzpicture}
      \draw (0,0.3)--(1,1)
            (0,0.3)--(0,-1)
            (0,0.3)--(-1,1)
            (1,1)--(0,-1)
            (0,-1)--(-1,1)
            (-1,1)--(1,1);
       \filldraw (0,0.3) circle (1.5pt)
                 (0,-1) circle (1.5pt)
                 (1,1) circle (0.8pt)
                 (-1,1) circle (0.8pt);
      \node at (0,1.1) {\tiny{$a$}};
      \node at (0.15,-0.2) {\tiny{$a'$}};
      \node at (-0.4,0.7) {\tiny{$b$}};
      \node at (0.65,0) {\tiny{$b'$}};
      \node at (0.4,0.7) {\tiny{$c'$}};
      \node at (-0.65,0) {\tiny{$c$}};
      \node at (0,1) {\tiny{$*$}};
      \node at (0,-0.3) {\tiny{$*$}};
    \end{tikzpicture}
  \end{center}
  In order to guarantee the residues of $f^*q_0$ are equal to each other, the edges $a(b,c)$ and $a'(b',c',\text{ respectively})$ must be with the same coloured pointed. Assume that the colour of  middle point on edge $a$ is $*$(black or white), then the dessin of $f$ is the pull back by $x^2$ of
  \begin{center}
    \begin{tikzpicture}
      \draw (0,-0.3)--(0,-1)
            (0,1)--(0.7,1);
      \filldraw (0,-1) circle (1.5pt)
                (0,1) circle (0.8pt);
      \draw (0,-1) arc (-45:45:0.8cm and 1.42cm);
      \draw (0,1) arc (135:225:0.8cm and 1.42cm);
      \node at (0.7,1) {$*$};
      \node at (0,-0.3) {$*$};
      \node at (-0.35,0) {\tiny{$c$}};
      \node at (0.35,0) {\tiny{$b$}};
    \end{tikzpicture}
  \end{center}
  For the second case, the proof is similar and Example \ref{ex:degree12} is an explicit construction.
\end{proof}

Suppose $q$ is the Strebel differential corresponding to the metric ribbon graph as Figure \ref{fig:Generalcase}. Let
\begin{align*}
  F_q = \{f: \mathbb{P}^{1} \rightarrow \mathbb{P}^{1} | f^*q_0 = \nu q \text{ for some nonzero complex number }\nu \}.
\end{align*}
We have the following proposition
\begin{prop}
\label{prop:MinimalDegree}
    For any given 3 positive rational numbers $a,b,c$ satisfying $a+b+c=1$, Let $d$ be the minimal positive integer so that $da,db,dc \in \mathbb{Z}$. Then GCD$(da,db,dc) = 1$ and
    \begin{itemize}
      \item $2\mid d$, $\min\limits_{f \in F_q} \deg f = 2d$;
      \item $2 \nmid d$,$\min\limits_{f \in F_q} \deg f = 4d$.
    \end{itemize}
\end{prop}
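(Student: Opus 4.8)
The plan is to bound $\deg f$ from below for every $f\in F_q$ and then to show the bound is attained by building an explicit dessin d'enfant; both steps go through the metric-ribbon-graph picture and the correspondence \eqref{HCorrespondence}. For the lower bound, fix $f\in F_q$ with $f^*q_0=\nu q$ (any such $f$ is automatically a Belyi map, since a critical point of $f$ over a value outside $\{0,1,\infty\}$ would give $f^*q_0$ a zero of order $\ge2$). As in the proof of Proposition \ref{prop:BelyiFactor}: $f$ has exactly four preimages of $1$; all ramification of $f$ over $\{0,\infty\}$ has index $2$ or $3$, with exactly four indices equal to $3$ (the four simple zeros of $f^*q_0$); and the four ramification indices over $1$ are all equal, to a common integer $e_1\ge2$, so that $\nu=e_1^2$ and $\deg f=4e_1$. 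The extra input is a length count. The differential $q_0$ has a single non-closed horizontal trajectory $e_0$ — the image of $[0,1]$ under $\phi(z)=\frac{z}{z-1}$ — of $q_0$-length $\tfrac12$, and $f$ is unramified over its interior; hence the critical graph of $f^*q_0$ is $f^{-1}(e_0)$ with its bivalent vertices suppressed, and every arc of $f^{-1}(e_0)$ maps isometrically onto $e_0$, so has $f^*q_0$-length $\tfrac12$. Since the critical graph of $q$ is the graph $\Gamma=K_4$ of Figure \ref{fig:Generalcase} with edge-lengths $a,b,c$ and $f^*q_0=e_1^2q$ rescales $q$-lengths by $e_1$, each edge of $\Gamma$ is a concatenation of $k\ge1$ arcs, of $q$-length $k/(2e_1)$; therefore $2e_1a,\,2e_1b,\,2e_1c\in\mathbb Z_{>0}$, i.e. $d\mid 2e_1$. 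Also $g:=\gcd(da,db,dc)$ divides $d(a+b+c)=d$, and $d/g$ is then a positive integer clearing all three denominators, so $g=1$ by minimality of $d$. As $a,b,c\in(0,1)$ forces $d\ge3$, the relation $d\mid 2e_1$ gives $e_1\ge d/2$ when $2\mid d$ and $e_1\ge d$ when $2\nmid d$; hence $\deg f=4e_1\ge2d$, resp. $\ge4d$.

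For the reverse inequality, take $e_1=d/2$ if $2\mid d$ and $e_1=d$ if $2\nmid d$, subdivide each edge of $\Gamma$ of length $\ell\in\{a,b,c\}$ into $2e_1\ell$ arcs by inserting $2e_1\ell-1$ bivalent vertices, and call the result $\widetilde{\Gamma}$: a graph on $\mathbb P^1$ with $N=4e_1$ edges, the four original trivalent vertices, $N-6$ bivalent vertices, and the four faces of $\Gamma$, each now of face-degree $2e_1$. Every cycle of $K_4$ is a triangle (one edge from each of the three perfect matchings) or a $4$-cycle (two edges from each of two matchings), so after subdivision it has even length, $2e_1$ resp. $2(k_i+k_j)$; hence $\widetilde{\Gamma}$ is bipartite. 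Two-colouring it (black over $0$, white over $\infty$) partitions the four trivalent vertices in a $2+2$ pattern when $2\mid d$ (exactly one of $e_1a,e_1b,e_1c$ being an integer) and in a $4+0$ pattern when $2\nmid d$ (all three being integers), using that every triangle of $K_4$ meets each matching exactly once; in either case all vertices of $\widetilde{\Gamma}$ lie over $\{0,\infty\}$ and the faces over $1$. By Grothendieck's dessin--Belyi correspondence, $\widetilde{\Gamma}$ is the dessin $h^{-1}([0,1])$ of a unique Belyi map $h\colon\mathbb P^1\to\mathbb P^1$; put $f=\phi^{-1}\circ h$, so that $f^{-1}(e_0)=\widetilde{\Gamma}$. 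Then $f$ has degree $N$, exactly four preimages of $1$ of ramification $e_1$, and ramification $2$ or $3$ over $\{0,\infty\}$ with exactly four $3$'s, so $f^*q_0$ is a Strebel differential (no recurrent trajectories, $f$ being proper) with four simple zeros, four double poles of residue $e_1$, and critical graph $\Gamma$ of $q$-lengths $a,b,c$. By the uniqueness in \eqref{HCorrespondence}, after a M\"obius normalisation of the double poles, $f^*q_0=e_1^2q$; thus $f\in F_q$ and $\deg f=4e_1$ attains the lower bound, giving $\min_{f\in F_q}\deg f=2d$ for $2\mid d$ and $4d$ for $2\nmid d$.

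I expect the main obstacle to lie in the trajectory bookkeeping of the lower bound: one must use the horizontal-trajectory structure of $q_0$ to be sure that the critical graph of $f^*q_0$ is literally $f^{-1}(e_0)$, that each edge of $\Gamma$ breaks into an integer number of arcs of $q_0$-length $\tfrac12$ with no arc repeated, and how the scalar $\nu=e_1^2$ enters the length rescaling. The realisation is more mechanical, the only points needing care being the bipartiteness of $\widetilde{\Gamma}$ and the final identification $f^*q_0=e_1^2q$ via uniqueness; and one should not forget that placing the trivalent vertices over $\{0,\infty\}$ (automatic for a dessin) is what forces $f^*q_0$ to have four simple zeros, hence $f\in F_q$.
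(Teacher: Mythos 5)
Your proposal is correct and follows essentially the same route as the paper: a lower bound obtained by counting the length-$\tfrac12$ arcs of $f^{-1}(e_0)$ along each edge of the $K_4$ ribbon graph (forcing $2e_1a,2e_1b,2e_1c\in\mathbb{Z}$, hence $d\mid 2e_1$ with $\deg f=4e_1$), and a matching upper bound by subdividing $K_4$ into a bipartite dessin. The only notable variation is in the odd-$d$ case, where you exclude degree $2d$ via the integrality of $e_1$ (since $d\mid 2e_1$ and $d$ odd force $d\mid e_1$), whereas the paper invokes the parity obstruction that a properly two-coloured triangle cannot have side-parities $(\text{even},\text{even},\text{odd})$ or $(\text{odd},\text{odd},\text{odd})$; both arguments are valid.
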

\begin{proof}
  If GCD$(da,db,dc) = k > 1 $, then $k|(da+db+dc) = d$. Let $d'=\frac{d}{k}$, we have $d'a,d'b,d'c \in \mathbb{Z}$. Contradiction!

  For the first case, there must be two odd numbers in $\{da,db,dc\}$ since GCD$(da,db,dc)=1$ and $da+db+dc=d(even)$. Without loss of generality, we assume $db$ and $dc$ are odd and $da$ is even. We can draw dessin as following
  \begin{center}
    \begin{tikzpicture}
      \draw (0,0.6)--(2,2)
            (0,0.6)--(0,-2)
            (0,0.6)--(-2,2)
            (2,2)--(0,-2)
            (0,-2)--(-2,2)
            (-2,2)--(2,2);
      \draw[white] (0.7,1.09)--(1.3,1.51)
                   (-0.7,1.09)--(-1.3,1.51)
                   (0,-0.2)--(0,-1.2)
                   (-0.8,2)--(0.8,2)
                   (-1.3,0.6)--(-0.7,-0.6)
                   (1.3,0.6)--(0.7,-0.6);
      \draw[dashed] (0.7,1.09)--(1.3,1.51)
                    (-0.7,1.09)--(-1.3,1.51)
                    (0,-0.2)--(0,-1.2)
                    (-0.8,2)--(0.8,2)
                    (-1.3,0.6)--(-0.7,-0.6)
                    (1.3,0.6)--(0.7,-0.6);
      \filldraw[white] (-2,2) circle (1.5pt)
                       (-1,2) circle (1.5pt)
                       (1,2) circle (1.5pt)
                       (2,2) circle (1.5pt)
                       (-0.5,-1) circle (1.5pt)
                       (0.5,-1) circle (1.5pt)
                       (0,0.3) circle (1.5pt)
                       (0,-1.7) circle (1.5pt)
                       (-0.5,0.95) circle (1.5pt)
                       (0.5,0.95) circle (1.5pt);
      \draw (-2,2) circle (1.5pt)
            (-1,2) circle (1.5pt)
            (1,2) circle (1.5pt)
            (2,2) circle (1.5pt)
            (-0.5,-1) circle (1.5pt)
            (0.5,-1) circle (1.5pt)
            (0,0.3) circle (1.5pt)
            (0,-1.7) circle (1.5pt)
            (-0.5,0.95) circle (1.5pt)
            (0.5,0.95) circle (1.5pt);

      \filldraw (-1.5,2) circle (1.5pt)
                (1.5,2) circle (1.5pt)
                (-1.5,1) circle (1.5pt)
                (1.5,1) circle (1.5pt)
                (0,0.6) circle (1.5pt)
                (0,-2) circle (1.5pt)
                (0,0) circle (1.5pt)
                (0,-1.4) circle (1.5pt)
                (-1.5,1.65) circle (1.5pt)
                (1.5,1.65) circle (1.5pt);
      \node at (0,2.3) {\small{$da$ segments}};
      \node[rotate=-90] at (0.2,-0.4) {\tiny{$da$ segments}};
      \node[rotate=65]at (1.3,0) {\small{$db$ segments}};
      \node[rotate=-35] at (-1,1.5) {\tiny{$db$ segments}};
      \node[rotate=-65]at (-1.3,0) {\small{$dc$ segments}};
      \node[rotate=35] at (1,1.5) {\tiny{$dc$ segments}};
    \end{tikzpicture}
  \end{center}

  The degree of the corresponding Belyi map $f$ is $2d$ and $f \in F_q$. Suppose there exists $g \in F_q$ such that $\deg g = 2d' < 2d$, then each edge of dessin associated to $g$ has $d'a(d'b \text{ or }d'c)$ segments i.e. $d'a,d'b,d'c \in \mathbb{Z}$. Which has a contradiction with the minimality of $d$.

  For the second one, the possible parity of $(da,db,dc)$ is (even, even, odd) or (odd, odd, odd). Similarly, consider dessin d'enfant
  \begin{center}
    \begin{tikzpicture}
      \draw (0,0.6)--(2,2)
            (0,0.6)--(0,-2)
            (0,0.6)--(-2,2)
            (2,2)--(0,-2)
            (0,-2)--(-2,2)
            (-2,2)--(2,2);
      \draw[white] (0.7,1.09)--(1.3,1.51)
                   (-0.7,1.09)--(-1.3,1.51)
                   (0,-0.2)--(0,-1.2)
                   (-0.8,2)--(0.8,2)
                   (-1.3,0.6)--(-0.7,-0.6)
                   (1.3,0.6)--(0.7,-0.6);
      \draw[dashed] (0.7,1.09)--(1.3,1.51)
                    (-0.7,1.09)--(-1.3,1.51)
                    (0,-0.2)--(0,-1.2)
                    (-0.8,2)--(0.8,2)
                    (-1.3,0.6)--(-0.7,-0.6)
                    (1.3,0.6)--(0.7,-0.6);
      \filldraw[white] (-2,2) circle (1.5pt)
                       (-1,2) circle (1.5pt)
                       (1,2) circle (1.5pt)
                       (2,2) circle (1.5pt)
                       (0,0.6) circle (1.5pt)
                       (0,-2) circle (1.5pt);
      \draw (-2,2) circle (1.5pt)
            (-1,2) circle (1.5pt)
            (1,2) circle (1.5pt)
            (2,2) circle (1.5pt)
            (0,0.6) circle (1.5pt)
            (0,-2) circle (1.5pt);

      \filldraw (-1.5,2) circle (1.5pt)
                (1.5,2) circle (1.5pt)
                (-1.5,1) circle (1.5pt)
                (1.5,1) circle (1.5pt)
                (0,0) circle (1.5pt)
                (0,-1.4) circle (1.5pt)
                (-1.5,1.65) circle (1.5pt)
                (1.5,1.65) circle (1.5pt)
                (-0.5,-1) circle (1.5pt)
                (0.5,-1) circle (1.5pt)
                (-0.5,0.95) circle (1.5pt)
                (0.5,0.95) circle (1.5pt);
      \node at (0,2.3) {\small{$2da$ segments}};
      \node[rotate=-90] at (0.2,-0.4) {\tiny{$2da$ segments}};
      \node[rotate=65]at (1.3,0) {\small{$2db$ segments}};
      \node[rotate=-35] at (-1,1.5) {\tiny{$2db$ segments}};
      \node[rotate=-65]at (-1.3,0) {\small{$2dc$ segments}};
      \node[rotate=35] at (1,1.5) {\tiny{$2dc$ segments}};
    \end{tikzpicture}
  \end{center}
  Then the Belyi map $f$ associated to this dessin has degree $4d$ and $f \in F_q$. Since there does not exist  bicolour triangle such that the parity of the number of segments on $3$ edges is (even, even, odd) or (odd, odd, odd), $\deg f$ is minimal.
\end{proof}
At the very end of this section, we give some examples by our own method.
\begin{ex}
  Consider the following dessin d'enfant(can also be viewed as a metric ribbon graph with $(a,b,c) =(\frac{1}{2}, \frac{1}{4}, \frac{1}{4})$),
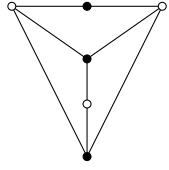
\begin{figure}[H]
  \begin{center}
    \begin{tikzpicture}
      \draw (0,0.3)--(1,1)
            (0,0.3)--(0,-1)
            (0,0.3)--(-1,1)
            (1,1)--(0,-1)
            (0,-1)--(-1,1)
            (-1,1)--(1,1);
      \fill[white] (1,1) circle (1.5pt)
                   (-1,1) circle (1.5pt)
                   (0,-0.3) circle (1.5pt);
      \draw  (1,1) circle (1.5pt)
             (-1,1) circle (1.5pt)
             (0,-0.3) circle (1.5pt);
      \filldraw (0,1) circle (1.5pt)
                (0,0.3) circle (1.5pt)
                (0,-1) circle (1.5pt);
    \end{tikzpicture}
  \end{center}
\caption{Dessin corresponds to a Belyi map of degree $8$.}
\end{figure}
The corresponding Belyi map is
\begin{equation*}
  f(x) = -\frac{1}{2^{12}}\frac{(x-1)^2(9x^2+14x+9)^3}{x^3(x+1)^2},
\end{equation*}
and the ramification degrees over $0,1,\infty$ are $(2,3,3),(2,2,2,2)$ and $(2,3,3)$ respectively. The points of $f^{-1}(1)$ satisfying equation
\begin{align*}
  27x^4 + 36x^3 + 2x^2 + 36x + 27 = 0,
\end{align*}
whose roots are
\begin{align*}
  x_1 &= -\frac{1}{3}\Big(\frac{4}{\sqrt{3}}+1+2\sqrt{\frac{2}{3}(\sqrt{3}-1)}\Big),
  &x_2 = -\frac{1}{3}\Big(\frac{4}{\sqrt{3}}+1-2\sqrt{\frac{2}{3}(\sqrt{3}-1)}\Big), \\
  x_3 &= \frac{1}{3}\Big(\frac{4}{\sqrt{3}}-1+2i\sqrt{\frac{2}{3}(\sqrt{3}+1)}\Big),
  &x_4 = \frac{1}{3}\Big(\frac{4}{\sqrt{3}}-1-2i\sqrt{\frac{2}{3}(\sqrt{3}+1)}\Big).
\end{align*}
The pull back of $q_0$ by $f$ is
\begin{align*}
  f^*q_0 = +\frac{dx^2}{4\pi^2}\frac{4096x(9x^2+14x+9)}{(27x^4 + 36x^3 + 2x^2 + 36x + 27)^2}.
\end{align*}
Then
\begin{align*}
  q_1 = +\frac{dx^2}{4\pi^2}\frac{1024x(9x^2+14x+9)}{(27x^4 + 36x^3 + 2x^2 + 36x + 27)^2}
\end{align*}
is a Strebel differential with $4$ simple zeroes and $4$ double poles and residue vector $(1,1,1,1)$. Consider the M\"{o}bius transformation $x \mapsto \frac{x-x_2}{x-x_3}\cdot\frac{x_1-x_3}{x_1-x_2}$, then $q_1$ becomes to
\begin{small}
  \begin{align*}
    -\frac{dx^2}{4\pi^2}\cdot\frac{x^4-(2+\sqrt{2}i)x^3-(\frac{1}{2}-\frac{3\sqrt{2}}{2}i)x^2 + (\frac{3}{2}-3\sqrt{2}i)x - \frac{23}{8}+\frac{5\sqrt{2}}{4}i}{x^2 (x-1)^2 (x-(\frac{1}{2}+\frac{5\sqrt{2}}{4}i))^2}.
  \end{align*}
\end{small}
Hence, $\lambda = \frac{1}{2}+\frac{5\sqrt{2}}{4}i$ and $\mu(\lambda) = 1+\frac{3\sqrt{2}}{2}i$.
\end{ex}

\begin{ex}
  Let us consider a Belyi map $f(x)$
  \begin{align*}
    \frac{f(x)}{1+f(x)} = -\frac{64x^3(x^3-1)^3}{(8x^3+1)^3},
  \end{align*}
  whose local ramification degrees over $ 1, 0, \infty$ are $(3,3,3,3), (3,3,3,3)$ and $(2,2,2,2,2,2)$ respectively.
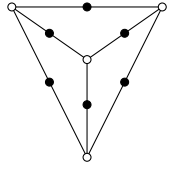
\begin{figure}[H]
  \begin{center}
    \begin{tikzpicture}
      \draw (0,0.3)--(1,1)
            (0,0.3)--(0,-1)
            (0,0.3)--(-1,1)
            (1,1)--(0,-1)
            (0,-1)--(-1,1)
            (-1,1)--(1,1);
      \fill[white] (1,1) circle (1.5pt)
                   (-1,1) circle (1.5pt)
                   (0,0.3) circle (1.5pt)
                   (0,-1) circle (1.5pt);
      \draw  (1,1) circle (1.5pt)
             (-1,1) circle (1.5pt)
             (0,0.3) circle (1.5pt)
             (0,-1) circle (1.5pt);
      \filldraw (0,1) circle (1.5pt)
                (0.5,0) circle (1.5pt)
                (-0.5,0) circle (1.5pt)
                (0,-0.3) circle (1.5pt)
                (0.5,0.65) circle (1.5pt)
                (-0.5,0.65) circle (1.5pt);
     \end{tikzpicture}
  \end{center}
\caption{Dessin corresponds to the Belyi map of degree $12$.}
\end{figure}
  Similarly, we can get a Strebel differential
  \begin{align*}
    -\frac{dx^2}{4\pi^2}\cdot \frac{x^4-(2-\frac{2}{\sqrt{3}}i)x^3+(1-\sqrt{3}i)x^2+\frac{4i}{\sqrt{3}}x - (\frac{1}{2}+\frac{\sqrt{3}}{2}i)}{x^2 (x-1)^2 (x-(\frac{1}{2}-\frac{\sqrt{3}}{2}i))^2}.
  \end{align*}
  Hence $\mu (\frac{1}{2}-\frac{\sqrt{3}}{2}i) = 1 - \frac{\sqrt{3}}{3}i$. In fact, we have a simpler way to figure out $\mu (\frac{1}{2}-\frac{\sqrt{3}}{2}i)$. Note that $1-\lambda = 1/\lambda$ when $\lambda = \frac{1}{2}-\frac{\sqrt{3}}{2}i $, then $2 - \mu(\lambda) = \frac{\mu(\lambda)}{\lambda}$ which implies $\mu(\lambda) =  1 - \frac{\sqrt{3}}{3}i$.
\end{ex}

\begin{ex}
\label{ex:degree12}
  If $\deg f=12$ and $\frac{1}{3^2}\cdot f^*q_0$ has $4$ simple zeroes and $4$ double poles with residue vector $(1,1,1,1)$, the local ramification degrees over $1,0, \infty$ can also be $\Big(3^4, (2^3,3^2), (2^3,3^2) \Big)$. The only possible Dessins d'Enfants of $\frac{f}{f-1}$ are
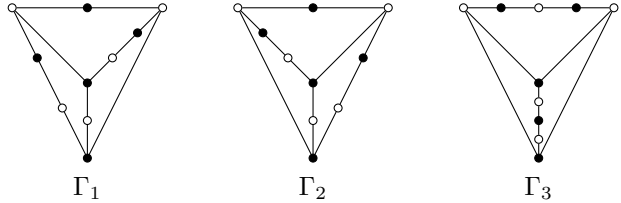
\begin{figure}[H]
  \begin{center}
    \begin{tikzpicture}
      \draw (-3,0)--(-2,1)  % first
          (-3,0)--(-4,1)
          (-3,0)--(-3,-1)
          (-2,1)--(-4,1)
          (-4,1)--(-3,-1)
          (-3,-1)--(-2,1);
      \filldraw [white] (-4,1) circle (1.5pt)
                      (-2,1) circle (1.5pt)
                      (-3,-0.5) circle (1.5pt)
                      (-10/3,-1/3) circle (1.5pt)
                      (-8/3,1/3) circle (1.5pt);
      \draw (-4,1) circle (1.5pt)
          (-2,1) circle (1.5pt)
          (-3,-0.5) circle (1.5pt)
          (-10/3,-1/3) circle (1.5pt)
          (-8/3,1/3) circle (1.5pt);
      \filldraw (-3,0) circle (1.5pt)
                (-3,1) circle (1.5pt)
              (-3,-1) circle (1.5pt)
              (-11/3,1/3) circle (1.5pt)
              (-7/3,2/3) circle (1.5pt);
      \node at (-3,-1.4) {$\Gamma _1$};

      \draw (0,0)--(1,1) %second
          (0,0)--(-1,1)
          (0,0)--(0,-1)
          (1,1)--(-1,1)
          (-1,1)--(0,-1)
          (0,-1)--(1,1);
      \filldraw [white] (-1,1) circle (1.5pt)
                      (1,1) circle (1.5pt)
                      (0,-0.5) circle (1.5pt)
                      (1/3,-1/3) circle (1.5pt)
                      (-1/3,1/3) circle (1.5pt);
      \draw (-1,1) circle (1.5pt)
          (1,1) circle (1.5pt)
          (0,-0.5) circle (1.5pt)
          (1/3,-1/3) circle (1.5pt)
          (-1/3,1/3) circle (1.5pt);
      \filldraw (0,0) circle (1.5pt)
                (0,1) circle (1.5pt)
              (0,-1) circle (1.5pt)
              (2/3,1/3) circle (1.5pt)
              (-2/3,2/3) circle (1.5pt);
      \node at (0,-1.4) {$\Gamma _2$};

      \draw (3,0)--(4,1)  % third
          (3,0)--(2,1)
          (3,0)--(3,-1)
          (4,1)--(2,1)
          (2,1)--(3,-1)
          (3,-1)--(4,1);
      \filldraw [white] (2,1) circle (1.5pt)
                      (4,1) circle (1.5pt)
                      (3,1) circle (1.5pt)
                      (3,-1/4) circle (1.5pt)
                      (3,-3/4) circle (1.5pt);
      \draw (2,1) circle (1.5pt)
          (4,1) circle (1.5pt)
          (3,1) circle (1.5pt)
          (3,-1/4) circle (1.5pt)
          (3,-3/4) circle (1.5pt);
      \filldraw (3,0) circle (1.5pt)
              (3,-1) circle (1.5pt)
              (3,-1/2) circle (1.5pt)
              (2.5,1) circle (1.5pt)
              (7/2,1) circle (1.5pt);
      \node at (3,-1.4) {$\Gamma _3$};
    \end{tikzpicture}
  \end{center}
\caption{The possible ribbon graph of $f^*q_0$.}
\label{fig:AllPossible}
\end{figure}
  Which can also be viewed as metric ribbon graph with $(a,b,c) = (\frac{1}{3}, \frac{1}{6}, \frac{1}{2})$, $(\frac{1}{3}, \frac{1}{2}, \frac{1}{6})$, $(\frac{2}{3}, \frac{1}{6}, \frac{1}{6})$ respectively. In order to write down some explicit Belyi map with the above branch date, by Proposition \ref{prop:BelyiFactor}, we only need to construct a Belyi map $g(x)$ of degree $6$ with local ramification degrees $\Big((1,2,3), (1,2,3), (3,3) \Big)$. Up to a scalar factor, $g(x)$ has the form of
  \begin{equation*}
  g(x)= \frac{x^3(x-a_1)(x-a_2)^2}{(x-a_3)(x-a_4)^2},
\end{equation*}
and $f(x)= g(\frac{a_3 x+a_1}{x+1}) \circ x^2 = g \circ \frac{a_3x^2+a_1}{x^2+1}$ up to scale. The derivative of $g(x)$ is
\begin{equation*}
  g'(x)= \frac{x^2 (x-a_2)}{(x-a_3)^2(x-a_4)^3}\cdot h(x),
\end{equation*}
where
\begin{footnotesize}
  \begin{align*}
    h(x)= &3x^4 - (2a_1+a_2+4a_3+5a_4)x^3 + (3a_1a_3+2a_2a_3+4a_1a_4+3a_2a_4+6a_3a_4)x^2\\
          &-(a_1a_2a_3+2a_1a_2a_4+5a_1a_3a_4+4a_2a_3a_4)x + 3a_1a_2a_3a_4.
  \end{align*}
\end{footnotesize}
Since $h(x)$ has $2$ double zeroes, we may assume that $h(x)=3(x-1)^2(x-a_5)^2$ and $g(1)=g(a_5)$. By comparing the coefficients of $h(x)$, we find $a_5$ satisfies the following equation
\begin{equation*}
  t^6+6t^5+15t^4+36t^3+15t^2+6t+1=0
\end{equation*}
and we have
\begin{align*}
  a_1 &= -\frac{1}{2}(1+10a_5+35a_5^2+15a_5^3+6a_5^4+a_5^5), \\
  a_2 &= \frac{1}{4}(7+16a_5+35a_5^2+15a_5^3+6a_5^4+a_5^5), \\
  a_3 &= \frac{1}{16}(21+45a_5+166a_5^2+70a_5^3+29a_5^4+5a_5^5), \\
  a_4 &= \frac{1}{20}(3a_5-61a_5^2-25a_5^3-11a_5^4-2a_5^5).
\end{align*}
{\bf Claim: The polynomial $P(t)=t^6+6t^5+15t^4+36t^3+15t^2+6t+1 \in \mathbb{Q}[t]$ is irreducible.}
\begin{proof}
  Note that $P(t)=t^6+6t^5+15t^4+36t^3+15t^2+6t+1=(t+1)^6+16 t^3$. Hence
  \begin{align*}
    P(t) &= (t+1)^6 = (t^2+1)(t^4+1) \text{ in } \mathbb{F}_2[t], \\
    P(t) &= t^6+1 = (t^2+1)^3 = (t^2+1)(t^4+2 t^2+1) \text{ in } \mathbb{F}_3[t].
  \end{align*}
  Suppose that $P(t)$ is reducible in $\mathbb{Z}[t]$. Since $t^2+1$ is irreducible in $\mathbb{F}_3[t]$, we have a polynomial factorization $P(t)=P_1(t)P_2(t)$ in $\mathbb{Z}[t]$, where $\deg P_1(t)=2,\deg P_2(t)=4$. Hence, we can assume that
  \begin{align*}
    P_1(t) &= t^2 + 6c_0 t + 1, \\
    P_2(t) &= t^4 + 6d_0 t^3 + (2+6d_1)t^2 + 6d_2 t + 1.
  \end{align*}
  The coefficients of $t$ and $t^5$ of $P_1(t)P_2(t)$ are $6(c_0+d_2)$ and $6(c_0+d_0)$ respectively, which imply that $d_0=d_2$ and $c_0+d_0 = 1$. The coefficient of $t^2$ is $3+6d_1+36c_0d_0 = 15$, therefore $c_0d_0=0$ and $d_1=2$. The only two possible factors are
  \begin{align*}
    \text{case }&1\\
    &P_1(t) = t^2 + 6t + 1\\
    &P_2(t) = t^4 + 14t^2 + 1\\
    \text{case }&2\\
    &P_1(t) = t^2 + 1\\
    &P_2(t) = t^4 + 6t^3 + 14t^2 + 6t+ 1
  \end{align*}
  However, both are impossible to satisfy $P(t) = P_1(t) P_2(t)$. By Gauss lemma, we know that $P(t)$ in irreducible in $\mathbb{Q}[t]$.
\end{proof}
As a corollary of this claim, we know that $a_1,a_2,a_3,a_4,a_5,0, 1$ are pairwise distinct. In order to get concrete expression of $f(x)$, we only need to solve the equation $P(t)=0$. Luckily, $P(t)$ is solvable by
radicals.
\begin{align*}
  (t+1)^6=-16t^3 \Longrightarrow (t+1)^2=e^{\frac{i\pi}{3}+\frac{2i\pi}{3}\cdot k}2^{\frac{4}{3}}t (k=0,1,2).
\end{align*}
We get the roots
\begin{align*}
  &\frac{1}{2}\Big(-2+2^{1/3}+2^{1/3}\sqrt{3}i \pm \sqrt{-4+(2-2^{1/3}-2^{1/3}\sqrt{3}i)^2}\Big), \\
  &-1-2^{1/3} \pm 2^{1/3}\sqrt{1+2^{2/3}}, \\
  &\frac{1}{2}\Big(-2+2^{1/3}-2^{1/3}\sqrt{3}i \pm \sqrt{-4+(2-2^{1/3}+2^{1/3}\sqrt{3}i)^2}\Big).
\end{align*}
By the construction we know that if $(a_1,a_2,a_3,a_4,a_5)$ is a solution of $g(x)$, then $(\frac{a_1}{a_5} , \frac{a_2}{a_5} , \frac{a_3}{a_5} , \frac{a_4}{a_5},\frac{1}{a_5})$ is also a solution of $g(x)$ and these two Belyi maps are equivalent under M\"{o}bius transformation. Hence, we only need to consider
\begin{align*}
  t_0 &= \frac{1}{2}\Big(-2+2^{1/3}+2^{1/3}\sqrt{3}i - \sqrt{-4+(2-2^{1/3}-2^{1/3}\sqrt{3}i)^2}\Big), \\
  t_1 &= -1-2^{1/3} - 2^{1/3}\sqrt{1+2^{2/3}}, \\
  t_2 &= \frac{1}{2}\Big(-2+2^{1/3}-2^{1/3}\sqrt{3}i - \sqrt{-4+(2-2^{1/3}+2^{1/3}\sqrt{3}i)^2}\Big).
\end{align*}
As before, we can figure out exact values of $\lambda$ and $\mu$. For example, the expression of $\lambda$ corresponding to $t_1$ is
\begin{small}
  \begin{align*}
    \frac{\Big(2 i + 2^{11/12} \sqrt{\frac{
   5\times 2^{1/6} \sqrt{2 + 2^{1/3}} (2 + 2^{2/3}) +
    2 \big(11 + 7\times 2^{1/3} + 7\times 2^{2/3} + 5 \sqrt{2 (2 + 2^{1/3})}\big)}{
   2^{1/6} (-18 + 6\times 2^{1/3} + 5\times 2^{2/3}) +
    \sqrt{2 + 2^{1/3}} (-10 - 2\times 2^{1/3} + 9\times 2^{2/3})}}\Big)^2}
    {\Big(-2 i + 2^{11/12} \sqrt{\frac{
   5\times 2^{1/6} \sqrt{2 + 2^{1/3}} (2 + 2^{2/3}) +
    2 \big(11 + 7\times 2^{1/3} + 7\times 2^{2/3} + 5 \sqrt{2 (2 + 2^{1/3})}\big)}{
   2^{1/6} (-18 + 6\times 2^{1/3} + 5\times 2^{2/3}) +
    \sqrt{2 + 2^{1/3}} (-10 - 2\times 2^{1/3} + 9\times 2^{2/3})}}\Big)^2},
  \end{align*}
\end{small}
which is too complicated. Here we give the  approximate values of $\lambda$ and $\mu$ correspond to each $t_k$
\begin{align*}
  \lambda_0 &= 1.3157 - 1.5429 i,     &\mu_0 = 1.6586 - 1.87049 i; \\
  \lambda_1 &= 0.9726 + 0.2324 i,     &\mu_1 = 0.3689 + 0.04346 i; \\
  \lambda_2 &= 1.3157 + 1.5429 i,     &\mu_2 = 1.6586 + 1.87049 i.
\end{align*}
Now we want to give the correspondence between $\lambda_i$ and the ribbon graphs of Figure \ref{fig:AllPossible}. At first, we note that the branch data of $g(x)$ is $(\{1,2,3\}, \{1,2,3\}, \{3,3\})$. Up to colour exchange and isomorphism there are $3$ possible dessins of $g(x)$.
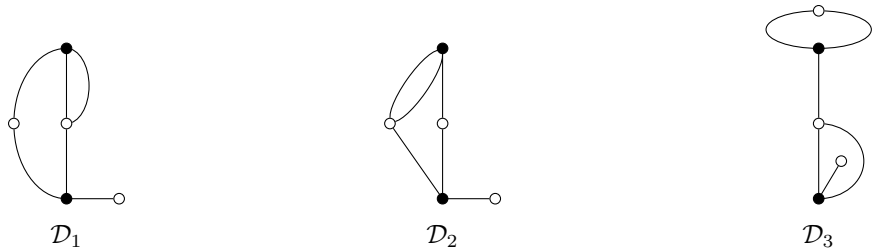
\begin{figure}[H]
\begin{center}
  \begin{tikzpicture}
    \draw (-5,0)--(-5,1)     % left
          (-5,0)--(-5,-1)
          (-5,-1)--(-4.3,-1);
    \draw (-5,1) arc (90:270:0.7cm and 1cm);
    \draw (-5,0) arc (-90:90:0.3cm and 0.5cm);
    \filldraw (-5,1) circle (2pt)
              (-5,-1) circle (2pt);
    \filldraw [white] (-5,0) circle (2pt)
                      (-4.3,-1) circle (2pt)
                      (-5.7,0) circle (2pt);
    \draw (-5,0) circle (2pt)
          (-4.3,-1) circle (2pt)
          (-5.7,0) circle (2pt);
    \node at (-5,-1.5) {$\mathcal{D}_1$};

    \draw (0,0)--(0,1)    % middle
          (0,0)--(0,-1)
          (0,-1)--(0.7,-1)
          (0,-1)--(-0.7,0);
    \draw[rotate around={145:(-0.35,0.5)}] (-0.35,0.5) ellipse (0.15cm and 0.57cm);
    \filldraw (0,1) circle (2pt)
              (0,-1) circle (2pt);
    \filldraw [white] (0,0) circle (2pt)
                      (0.7,-1) circle (2pt)
                      (-0.7,0) circle (2pt);
    \draw (0,0) circle (2pt)
          (0.7,-1) circle (2pt)
          (-0.7,0) circle (2pt);
    \node at (0,-1.5) {$\mathcal{D}_2$};

    \draw (5,0)--(5,1)   % right
          (5,0)--(5,-1)
          (5,-1)--(5.3,-0.5);
    \draw (5,1.25) ellipse (0.7cm and 0.25cm);
    \draw (5,-1) arc (-90:90:0.6cm and 0.5cm);
    \filldraw (5,1) circle (2pt)
              (5,-1) circle (2pt);
    \filldraw [white] (5,0) circle (2pt)
                      (5.3,-0.5) circle (2pt)
                      (5,1.5) circle (2pt);
    \draw (5,0) circle (2pt)
          (5.3,-0.5) circle (2pt)
          (5,1.5) circle (2pt);
    \node at (5,-1.5) {$\mathcal{D}_3$};
  \end{tikzpicture}
\end{center}
\caption{The  possible dessins of $g(x)$.}
\end{figure}
By definition, ribbon graph of $f^*q_0$ is the inverse image of $[0,\infty]$(negative real axis) by $f(x)$ and the dessin of $g(x)$ is the inverse image of segment $[0,1]$ by $g$. In order to get the ribbon graphs correspond to dessins of $g(x)$. We need to construct "dual graph" of these dessins. For example, consider the dessin $\mathcal{D}_2$,
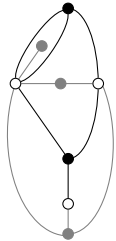
\begin{figure}[H]
\begin{center}
  \begin{tikzpicture}
    \draw (0,-1)--(0,-1.6)   % example
          (-0,-1)--(-0.7,0)
          (0,-1) arc (-90:90:0.4cm and 1cm);
    \draw[rotate around={145:(-0.35,0.5)}] (-0.35,0.5) ellipse (0.15cm and 0.57cm);
    \draw[gray] (-0.7,0)--(0.4,0)
                (-0.7,0)--(-0.35,0.5)
                (0,-1.6)--(0,-2)
                (0,-2) arc (-90:45:0.6cm and 1.2cm)
                (-0.7,0) arc (150:270:0.8cm and 1.35cm);
    \filldraw (0,1) circle (2pt)
              (0,-1) circle (2pt);
    \filldraw [white] (0.4,0) circle (2pt)
                      (0,-1.6) circle (2pt)
                      (-0.7,0) circle (2pt);
    \draw (0.4,0) circle (2pt)
          (0,-1.6) circle (2pt)
          (-0.7,0) circle (2pt);
    \filldraw[gray] (-0.35,0.5) circle (2pt)
               (0,-2) circle (2pt)
               (-0.1,0) circle (2pt);
  \end{tikzpicture}
\end{center}
\caption{Construction new dessin from old one.}
\end{figure}
then, the corresponding ribbon graph can be constructed as following
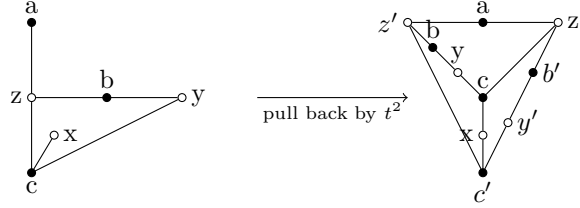
\begin{figure}[H]
\begin{center}
  \begin{tikzpicture}
    \draw (-4,0)--(-5,0) node[above]{b}
          (-5,0)--(-6,0) node[left]{z}
          (-6,0)--(-6,1) node[above]{a}
          (-6,0)--(-6,-1) node[below]{c}
          (-6,-1)--(-5.7,-0.5) node[right]{x}
          (-6,-1)--(-4,0) node[right]{y};
    \filldraw (-5,0) circle (1.5pt)
              (-6,1) circle (1.5pt)
              (-6,-1) circle (1.5pt);
    \filldraw[white] (-6,0) circle (1.5pt)
                     (-5.7,-0.5) circle (1.5pt)
                     (-4,0) circle (1.5pt);
    \draw (-6,0) circle (1.5pt)
          (-5.7,-0.5) circle (1.5pt)
          (-4,0) circle (1.5pt);

    \draw (1,1)--(0,0) node[above]{c}
          (0,0)--(-1/3,1/3)node[above]{y}--(-2/3,2/3)node[above]{b}--(-1,1) node[left]{$z'$}
          (0,0)--(0,-0.5)node[left]{x}--(0,-1) node[below]{$c'$}
          (-1,1)--(0,1)node[above]{a}--(1,1) node[right]{z}
          (-1,1)--(0,-1)
          (1,1)--(2/3,1/3)node[right]{$b'$}--(1/3,-1/3)node[right]{$y'$}--(0,-1);
      \filldraw [white] (-1,1) circle (1.5pt)
                      (1,1) circle (1.5pt)
                      (0,-0.5) circle (1.5pt)
                      (1/3,-1/3) circle (1.5pt)
                      (-1/3,1/3) circle (1.5pt);
      \draw (-1,1) circle (1.5pt)
          (1,1) circle (1.5pt)
          (0,-0.5) circle (1.5pt)
          (1/3,-1/3) circle (1.5pt)
          (-1/3,1/3) circle (1.5pt);
      \filldraw (0,0) circle (1.5pt)
                (0,1) circle (1.5pt)
              (0,-1) circle (1.5pt)
              (2/3,1/3) circle (1.5pt)
              (-2/3,2/3) circle (1.5pt);
      \draw[->] (-3,0) -- (-1,0) node at (-2,-0.2){\scriptsize{pull back by $t^2$}};
  \end{tikzpicture}
\end{center}
\caption{The ribbon graph associated to dessin.}
\end{figure}
By the same way, we can construct ribbon graphs corresponding to the other two dessins. In summary, we have
\begin{align*}
  \mathcal{D}_1&\leftrightarrow \Gamma_1 \\
  \mathcal{D}_2&\leftrightarrow \Gamma_2 \\
  \mathcal{D}_3&\leftrightarrow \Gamma_3.
\end{align*}
We first observe that the ribbon graph $\Gamma_1$ is the image of the ribbon graph $\Gamma_2$ by the complex conjugation $z\mapsto \overline{z}$, an orientation reversing homeomorphism. On the other hand, the equation $(t+1)^2=-2^{\frac{4}{3}}t$ is fixed by complex conjugation. Hence, the corresponding ribbon graph of $(\lambda_1, \mu_1)$ is $\Gamma_3$. By directly computing the inverse image of the segment $[0,1]$ by $g$, we know that the corresponding dessin of $(\lambda_0, \mu_0)$ is $\mathcal{D}_1$, i.e.
\begin{align*}
  (\lambda_0,\mu_0) &\leftrightarrow \Gamma_1 \\
  (\lambda_1,\mu_1) &\leftrightarrow \Gamma_3 \\
  (\lambda_2,\mu_2) &\leftrightarrow \Gamma_2.
\end{align*}
\end{ex}
By the above examples and Proposition \ref{prop:BelyiFactor} and \ref{prop:MinimalDegree}, we complete the proof of Theorem \ref{thm:GeneralCase}.

\section*{Acknowledgments}
The authors would like to express their deep gratitude to Mr Bo Li and Yiran Cheng for their valuable discussions during the course of this work. The first author also wants to thank Professor Zheng Hua at University of Hong Kong for his invitation and hospitality.
The first author is partially supported by National Natural Science Foundation of China (grant no. 11471298, no. 11622109 and no. 11721101) and the Fundamental Research Funds for the Central Universities. The second author is supported in part by the National Natural Science Foundation of China (Grant No. 11571330) and the Fundamental Research Funds for the Central Universities.

\small {\sc Bin Xu\\
Wu Wen-Tsun Key Laboratory of Math, USTC, CAS\\
School of Mathematical Sciences\\
University of Science and Technology of China\\
Hefei 230026 China}\\
bxu@ustc.edu.cn\\

{\sc  Jijian Song\\
Center for Applied Mathematics\\
School of Mathematics, Tianjin University\\
Tianjin 300350 China}\\
\Envelope smath@mail.ustc.edu.cn

\end{document}